\documentclass{article}
\usepackage{amsmath,amssymb,amsthm}

\usepackage[colorlinks=true,linkcolor=blue,citecolor=blue,pdfpagelabels=false]{hyperref}

\theoremstyle{plain}
  \newtheorem{theorem}{Theorem}[section]
  \newtheorem{lemma}[theorem]{Lemma}

  \newtheorem{proposition}[theorem]{Proposition}
  
\theoremstyle{definition}
  \newtheorem{example}[theorem]{Example}
  \newtheorem{remark}[theorem]{Remark}
  
\newenvironment{acknowledgements}{\bigskip\textbf{Acknowledgements.}}{}


\newcommand{\mathd}{\mathrm{d}}

\newcommand{\nin}{\not\in}



\renewcommand{\geq}{\geqslant}
\renewcommand{\leq}{\leqslant}



\makeatletter
\newcommand{\subjclass}[2][1991]{%
  \let\@oldtitle\@title%
  \gdef\@title{\@oldtitle\footnotetext{#1 \emph{Mathematics subject classification.} #2}}%
}
\newcommand{\keywords}[1]{%
  \let\@@oldtitle\@title%
  \gdef\@title{\@@oldtitle\footnotetext{\emph{Key words and phrases.} #1.}}%
}
\makeatother

\begin{document}

\title{Special values of trigonometric Dirichlet series and Eichler integrals}

\subjclass[2010]{Primary 11F11, 33E20; Secondary 11L03, 33B30}


\author{Armin Straub\thanks{Email: \texttt{astraub@illinois.edu}}\\
Department of Mathematics\\
University of Illinois at Urbana-Champaign}

\date{\today}

\maketitle

\begin{abstract}
  We provide a general theorem for evaluating trigonometric Dirichlet series
  of the form $\sum_{n \geq 1} \frac{f (\pi n \tau)}{n^s}$, where $f$ is
  an arbitrary product of the elementary trigonometric functions, $\tau$ a
  real quadratic irrationality and $s$ an integer of the appropriate parity.
  This unifies a number of evaluations considered by many authors, including
  Lerch, Ramanujan and Berndt. Our approach is based on relating the series to
  combinations of derivatives of Eichler integrals and polylogarithms.
\end{abstract}

\section{Introduction}

Special values of trigonometric Dirichlet series have been studied by Cauchy,
Lerch, Mellin, Hardy, Ramanujan, Watson and many others since the early 20th
century. Examples, which we will refer to later, include
\cite{berndt-dedekindsum76}, \cite{berndt-modular77},
\cite{berndt-series78}, \cite{kmt-barnes}, \cite{lrr-secantzeta},
\cite{bs-secantsums}, \cite{cg-secant}. Many further references,
especially to early publications, can be found in \cite[Chapter
14]{berndtII} and \cite[Chapter 37]{berndtV}. The long history of these
series includes, as an early example, the formulas
\begin{equation}
  \sum_{n = 1}^{\infty} \frac{\cot (\pi n i)}{n^{2 r - 1}} = \frac{1}{2} (2
  \pi i)^{2 r - 1} \sum_{m = 0}^r (- 1)^{m + 1} \frac{B_{2 m}}{(2 m) !} 
  \frac{B_{2 (r - m)}}{(2 (r - m)) !}, \label{eq:cot:i}
\end{equation}
for even $r$, which go back to Cauchy and Lerch with later proofs given by
several authors; see \cite[(6.2)]{berndt-dedekindsum76} or \cite[Entry
14.25]{berndtII} for a detailed history. In particular, the special case $r =
4$ in \eqref{eq:cot:i}, that is
\begin{equation}
  \sum_{n = 1}^{\infty} \frac{\cot (\pi n i)}{n^7} = - \frac{19 i}{56, 700}
  \pi^7, \label{eq:cot:i:7}
\end{equation}
was one of the formulas Ramanujan included in his first letter to Hardy
\cite[Entry 14.25(ii)]{berndtII}. These series are usually written in terms
of the hyperbolic cotangent (so that all quantities involved are real), but
the above forms are more natural for our purposes. The evaluations
\eqref{eq:cot:i} are a consequence of and are explained by Ramanujan's famous
and inspiring formula
\begin{eqnarray}
  &  & \alpha^{- m} \left\{ \frac{\zeta (2 m + 1)}{2} + \sum_{n = 1}^{\infty}
  \frac{n^{- 2 m - 1}}{e^{2 \alpha n} - 1} \right\} = (- \beta)^{- m} \left\{
  \frac{\zeta (2 m + 1)}{2} + \sum_{n = 1}^{\infty} \frac{n^{- 2 m - 1}}{e^{2
  \beta n} - 1} \right\} \nonumber\\
  &  & - 2^{2 m} \sum_{n = 0}^{m + 1} (- 1)^n \frac{B_{2 n}}{(2 n) !} 
  \frac{B_{2 m - 2 n + 2}}{(2 m - 2 n + 2) !} \alpha^{m - n + 1} \beta^n, 
  \label{eq:rama}
\end{eqnarray}
where $\alpha$ and $\beta$ are positive numbers with $\alpha \beta = \pi^2$
and $m$ is any nonzero integer. We refer to \cite{berndt-modular77} or
\cite[Entry 14.21(i)]{berndtII}, as well as the references therein. In
modern language, \eqref{eq:rama} can be seen to express the fact that, for odd
$s$, the cotangent Dirichlet series \cite{gun-murty-rath2011}
\begin{equation}
  \xi_s (\tau) = \sum_{n = 1}^{\infty} \frac{\cot (\pi n \tau)}{n^s}
  \label{eq:cot}
\end{equation}
is an Eichler integral of the Eisenstein series of weight $s + 1$ and level
$1$. We briefly review Eichler integrals in Section~\ref{sec:eichler}. As a
consequence, for certain $s$, $\xi_s (\tau)$ can be explicitly evaluated at
the values $\tau = i$ or $\tau = e^{2 \pi i / 3}$, which are fixed points of
linear fractional transformations induced by the modular group $\operatorname{SL}_2
(\mathbb{Z})$. Up to the action of $\operatorname{SL}_2 (\mathbb{Z})$, these are the
only fixed points in the upper half-plane. More generally, however, every real
quadratic irrationality occurs as the fixed point of some $\gamma \in
\operatorname{SL}_2 (\mathbb{Z})$. This is reviewed in Section~\ref{sec:bg}. Though
convergence of series such as \eqref{eq:cot} becomes an interesting issue when
$\tau$ is real, see Section~\ref{sec:conv}, one finds that $\xi_s (\rho) \in
\rho \pi^s \mathbb{Q}$ for any real quadratic irrational $\rho$ provided that
$s \geq 2$ is odd. For instance,
\begin{equation}
  \xi_3 ( \sqrt{7}) = \sum_{n = 1}^{\infty} \frac{\cot (\pi n \sqrt{7})}{n^3}
  = - \frac{\sqrt{7}}{20} \pi^3 . \label{eq:cot:eg7}
\end{equation}
Such evaluations of the cotangent Dirichlet series are discussed in
\cite{berndt-dedekindsum76}. See Example~\ref{eg:rama} for similar known
results.

As a recent addition to the zoo of special values of trigonometric Dirichlet
series, Y.~Komori, K.~Matsumoto and H.~Tsumura \cite{kmt-barnes}, based on
formulas for the Barnes multiple zeta-functions, discovered identities
including
\begin{equation}
  \sum_{n = 1}^{\infty} \frac{\cot^2 (\pi n \zeta_3)}{n^4} = - \frac{31}{2835}
  \pi^4, \hspace{1em} \hspace{1em} \sum_{n = 1}^{\infty} \frac{\csc^2 (\pi n
  \zeta_3)}{n^4} = \frac{1}{5670} \pi^4, \label{eq:cot2:kmt}
\end{equation}
where $\zeta_3 = e^{2 \pi i / 3}$ is the cube root of unity. One purpose of
and motivation for the present note is to put all these evaluations into a
general context.

Our main result shows that, depending on the parity of $s$, any trigonometric
Dirichlet series
\begin{equation*}
  \psi^{a, b}_s (\tau) = \sum_{n = 1}^{\infty} \frac{\operatorname{trig}^{a, b} (\pi
   n \tau)}{n^s}, \hspace{1em} \operatorname{trig}^{a, b} = \sec^a \csc^b,
\end{equation*}
where $a, b$ are integers, has the property that it can be evaluated when
$\tau$ is a real quadratic irrationality. Note that any product of the
trigonometric functions $\cos (x)$, $\sin (x)$, $\sec (x)$, $\csc (x)$, $\tan
(x)$, and $\cot (x)$ can be expressed as $\sec^a (x) \csc^b (x)$ for (unique)
integers $a$ and $b$. Our main result is the following.

\begin{theorem}
  \label{thm:eval}Let $\rho$ be a real quadratic irrationality, and let $a, b,
  s$ be integers such that, for convergence, $s \geq \max (a, b, 1) + 1$.
  If $s$ and $b$ have the same parity, then
  \begin{equation*}
    \psi^{a, b}_s (\rho) = \sum_{n = 1}^{\infty} \frac{\operatorname{trig}^{a, b}
     (\pi n \rho)}{n^s} \in \pi^s \mathbb{Q} (\rho) .
  \end{equation*}
  Moreover, if, in addition, $\rho^2 \in \mathbb{Q}$ and $a + b \geq 0$,
  then $\psi^{a, b}_s (\rho) \in (\pi \rho)^s \mathbb{Q}$.
\end{theorem}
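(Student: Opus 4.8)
The plan is to reduce the evaluation of $\psi^{a,b}_s$ to that of the cotangent Dirichlet series \eqref{eq:cot} and its dilated and character-twisted variants --- all of which are Eichler integrals --- and then to use that $\rho$ is fixed by a hyperbolic element of $\operatorname{SL}_2(\mathbb{Z})$ (Section~\ref{sec:bg}). First I would record the partial-fraction expansion of the periodic meromorphic function $\operatorname{trig}^{a,b}=\sec^a\csc^b$, whose poles lie only at $x\equiv 0$ and $x\equiv\pi/2\pmod\pi$, of orders at most $b$ and $a$. Using $\csc x=\cot(x/2)-\cot x$, $\cot(x-\pi/2)=-\cot x+2\cot(2x)$, $\sec x=\cot(x/2+\pi/4)+\cot x-2\cot(2x)$, together with $\csc''=2\csc^3-\csc$, $\cot'''=-6\csc^4+4\csc^2$, etc., one writes $\operatorname{trig}^{a,b}(x)$ as a \emph{finite} $\mathbb{Q}$-linear combination of a constant, of a trigonometric polynomial $T(x)$ with vanishing constant term (present only if $a+b<0$, absorbing the growth of $\operatorname{trig}^{a,b}$ as $\operatorname{Im}x\to\pm\infty$), and of functions $\frac{\md^j}{\md x^j}\cot(\alpha x-\omega)$ with $\alpha\in\{\tfrac12,1,2\}$, $\omega\in\{0,\pm\pi/4\}$ (a nonzero $\omega$ occurring only when $a$ is odd and $b$ even) and $0\le j\le\max(a,b)-1$. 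Substituting $x=\pi n\tau$, dividing by $n^s$ and summing, the factor $(\pi n\alpha)^{-j}$ produced by the $j$-fold derivative gives, for $\tau\in\mathbb{H}$,
\[
  \psi^{a,b}_s(\tau)=c_0\,\zeta(s)+\sum_i c_i\,\pi^{-j_i}\tfrac{\md^{j_i}}{\md\tau^{j_i}}\xi^{\chi_i}_{s_i}(\alpha_i\tau)+\sum_{n\ge 1}\frac{T(\pi n\tau)}{n^s},
\]
a finite sum with $c_i,c_0\in\mathbb{Q}$, where $\xi^{\chi}_{\sigma}(\tau)=\sum_{n\ge1}n^{-\sigma}\cot_\chi(\pi n\tau)$ is $\xi_\sigma$ twisted by a Dirichlet character $\chi$ of conductor dividing $4$, $s_i\equiv b\pmod2$, $s\le s_i\le s+\max(a,b)-1$, and $j_i=s_i-s$. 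The parity hypothesis $s\equiv b$ is exactly what forces every $s_i$ to be \emph{odd}, so that, by the theory recalled in Section~\ref{sec:eichler} (with \eqref{eq:rama} in the untwisted case and its analogue with generalized Bernoulli numbers otherwise), each $\xi^{\chi_i}_{s_i}$ is, up to elementary terms, an Eichler integral of a weight-$(s_i+1)$ Eisenstein series on a finite-index subgroup of $\operatorname{SL}_2(\mathbb{Z})$.

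Second, I would record functional equations. Each $\xi^\chi_\sigma$ with $\sigma$ odd satisfies, for $\gamma=\left(\begin{smallmatrix}A&B\\ C&D\end{smallmatrix}\right)$ in a suitable congruence subgroup, $(C\tau+D)^{\sigma-1}\xi^\chi_\sigma(\gamma\tau)-\xi^\chi_\sigma(\tau)=\pi^{\sigma}P_\gamma(\tau)$ with $P_\gamma\in\mathbb{Q}[\tau,\tau^{-1}]$ of degree $\le\sigma-1$ (up to a harmless power of $i$ when $\chi\ne1$). Dilating by $\alpha_i$ and differentiating $j_i$ times, each block $\xi^{\chi_i}_{s_i}(\alpha_i\,\cdot\,)$ and its first $j_i$ derivatives satisfy a triangular system of transformation laws over a common finite-index subgroup $\Gamma_0\le\operatorname{SL}_2(\mathbb{Z})$, the $k$-th law carrying the automorphy factor $(C\tau+D)^{s_i-1-2k}$ and a cocycle in $\pi^{s_i}\mathbb{Q}[\tau,\tau^{-1}]$. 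Finally $\sum_n n^{-s}T(\pi n\tau)$ is a combination of series $\sum_{n\ge1}n^{-s}e^{i\pi mn\tau}$, $m\in\mathbb{Z}$; since $T$ has the parity of $\operatorname{trig}^{a,b}$, i.e.\ of $b$, i.e.\ of $s$, Hurwitz's formula expresses the relevant even (resp.\ odd) combinations through Bernoulli polynomials, so they lie in $\pi^s\mathbb{Q}(\rho)$ at $\tau=\rho$.

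The evaluation: as $\rho$ is fixed by a hyperbolic $\gamma_\rho\in\operatorname{SL}_2(\mathbb{Z})$ and $[\operatorname{SL}_2(\mathbb{Z}):\Gamma_0]<\infty$, some power $\gamma=\gamma_\rho^{\,N}=\left(\begin{smallmatrix}A&B\\ C&D\end{smallmatrix}\right)\in\Gamma_0$ is still hyperbolic and fixes $\rho$; put $\lambda=C\rho+D$, an eigenvalue of $\gamma$, so $|\lambda|\ne1$. Evaluating the triangular system for $\xi^{\chi_i}_{s_i}(\alpha_i\,\cdot\,)$ at $\tau=\rho$ (where $\gamma\rho=\rho$) and solving by induction on $k=0,1,\dots,j_i$: at each step $(\lambda^{s_i-1-2k}-1)$ times the $k$-th derivative at $\rho$ equals $\pi^{s_i}P_\gamma^{(k)}(\rho)$ plus lower-order contributions already in $\pi^{s_i}\mathbb{Q}(\rho)$. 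The exponent $s_i-1-2k$ never vanishes: $k=(s_i-1)/2$ with $k\le j_i$ would force $j_i\ge s-1$, but $j_i\le\max(a,b)-1\le s-2$ by $s\ge\max(a,b,1)+1$; indeed these exponents are positive, so each $\lambda^{s_i-1-2k}\ne1$. Thus the system solves, $\frac{\md^{j_i}}{\md\tau^{j_i}}\xi^{\chi_i}_{s_i}(\alpha_i\tau)\big|_{\tau=\rho}\in\pi^{s_i}\mathbb{Q}(\rho)$, and multiplying by $\pi^{-j_i}$ places the $i$-th term in $\pi^s\mathbb{Q}(\rho)$; the auxiliary factors of $i$ from the twisted blocks disappear from the sum since $\psi^{a,b}_s(\rho)$ is real and $\mathbb{Q}(i,\rho)\cap\mathbb{R}=\mathbb{Q}(\rho)$. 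With $c_0\zeta(s)$ (in $\pi^s\mathbb{Q}$, since $c_0\ne0$ forces $\operatorname{trig}^{a,b}$ even, hence $s$ even) and the Hurwitz contributions, this gives $\psi^{a,b}_s(\rho)\in\pi^s\mathbb{Q}(\rho)$. For the refinement, let $\rho^2\in\mathbb{Q}$ and $a+b\ge0$ (so $T\equiv0$): comparing rational and irrational parts of $C\rho^2+(D-A)\rho-B=0$ shows any $\gamma\in\operatorname{SL}_2(\mathbb{Z})$ fixing $\rho$ has $D=A$, $B=C\rho^2$, hence also fixes $-\rho$. As $\sec$ is even and $\csc$ odd, $\operatorname{trig}^{a,b}(-x)=(-1)^b\operatorname{trig}^{a,b}(x)$, so $\psi^{a,b}_s(-\tau)=(-1)^b\psi^{a,b}_s(\tau)$; and since every ingredient in the above solution at $\rho$ ($\lambda=C\rho+A$, the coefficients, $P_\gamma(\rho)$) is a rational expression in $\rho$, running it at $-\rho$ with the same $\gamma$ exhibits $\psi^{a,b}_s(-\rho)/\pi^s$ as the Galois conjugate of $\psi^{a,b}_s(\rho)/\pi^s\in\mathbb{Q}(\rho)$. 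Comparing, $\psi^{a,b}_s(\rho)/\pi^s\in\mathbb{Q}$ if $b$ (hence $s$) is even and $\in\rho\,\mathbb{Q}$ if $b$ (hence $s$) is odd; since $\rho^s\in\mathbb{Q}$ for $s$ even and $\rho^s\in\rho\,\mathbb{Q}$ for $s$ odd (using $\rho^2\in\mathbb{Q}$), in both cases $\psi^{a,b}_s(\rho)\in(\pi\rho)^s\mathbb{Q}$.

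The part I expect to be hardest is not the algebra but the analysis: one must legitimise evaluating these $\mathbb{H}$-holomorphic functions, and their cocycle relations, at the boundary point $\rho$, i.e.\ establish convergence of $\psi^{a,b}_s$ (and of the derivatives of its building blocks) at $\rho$. When $s>\max(a,b)+1$ this is easy, since $\operatorname{trig}^{a,b}$ has poles of order $\le\max(a,b)$ and $\rho$ is badly approximable, making the terms $O(n^{\max(a,b)-s})$; but in the borderline case $s=\max(a,b)+1$ absolute convergence fails and one must exploit that the convergent denominators of $\rho$ grow geometrically. This is the delicate point isolated in Section~\ref{sec:conv}, and is exactly where (together with the dimension count $j_i\le s-2$) the hypothesis $s\ge\max(a,b,1)+1$ is essential; the twisted/dilated analogues of \eqref{eq:rama} with rational period polynomials, and the partial-fraction bookkeeping, are classical but laborious.
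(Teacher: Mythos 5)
Your overall architecture coincides with the paper's: decompose $\operatorname{trig}^{a,b}$ into building blocks whose Dirichlet series are derivatives of Eichler integrals; use the Pell equation to produce a nontrivial $\gamma$ in the relevant congruence subgroup fixing $\rho$ (the paper's Lemma~\ref{lem:fix}); differentiate the cocycle relation to obtain a triangular system, solvable because the automorphy exponents $s_i-1-2k$ stay positive under $s\geq\max(a,b,1)+1$ (the paper's \eqref{eq:ppD} and the hypothesis $s\geq 2j+2$ in Proposition~\ref{prop:eval:secD}); and deduce the refinement from the fact that $\gamma$ also fixes $-\rho$ together with Galois conjugation. The convergence issue you flag at the end is indeed treated separately (Section~\ref{sec:conv}), exactly as you anticipate.

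The genuine gap is in your decomposition step, namely the claim that every block is a (twisted, dilated) \emph{cotangent} series with $s_i$ odd. Run your own reduction on the secant series itself, $(a,b)=(1,0)$ with $s$ even: the identity $\sec x=\cot(x/2+\pi/4)+\cot x-2\cot(2x)$ produces the blocks $\xi_s(\tau)$, $\xi_s(2\tau)$ and $\sum_{n\geq1}n^{-s}\cot(\pi n\tau/2+\pi/4)$, all with $s_i=s$ \emph{even}; indeed your own bookkeeping $s_i\equiv b\equiv s\pmod{2}$ gives $s_i$ the parity of $s$, not oddness. Worse, for even $\sigma$ the cotangent series $\xi_\sigma$ is \emph{not} an Eichler integral: its $\sigma$-th derivative is, up to constants, $\sum_N\sigma_\sigma(N)q^N$, which would have to be a level-one Eisenstein series of odd weight $\sigma+1$ with trivial character, and no such nonzero form exists; the same objection applies to $\xi_\sigma(2\tau)$ and to the tangent series at even $\sigma$. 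Only the full combination --- the secant series --- transforms, owing to the odd character modulo $4$ hidden in its $q$-expansion; this is precisely \eqref{eq:sec:pp}, the main result of \cite{lrr-secantzeta} and \cite{bs-secantsums}, and it is destroyed when you split $\sec$ into shifted cotangents. So your second step (a functional equation for each block separately) cannot be carried out for the pieces your decomposition produces whenever $a+b$ is odd. The repair is what the paper does: take as atoms the four series $\sec$, $\csc$, $\tan$, $\cot$ --- each an Eichler integral for $s\equiv b\pmod{2}$ on its own congruence subgroup ($\Gamma_2$, $\Gamma^0(2)$, $\Gamma_0(2)$, $\Gamma_1$ respectively) --- and reduce $\operatorname{trig}^{a,b}$ to their derivatives via \eqref{eq:trig:red2} and the recurrences such as $\sec^{a+2}=\frac{1}{a(a+1)}(D^2+a^2)\sec^a$; equivalently, keep your partial-fraction expansion but do not split $\sec$ and $\tan$ further. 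With that substitution the remainder of your argument goes through and matches Propositions~\ref{prop:eval:secD} and \ref{prop:eval:D}.
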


The underlying reason for Theorem~\ref{thm:eval}, which we prove in
Section~\ref{sec:eval}, is that $\psi^{a, b}_s (\tau)$ can be expressed as a
linear combination of derivatives of Eichler integrals and polylogarithms.
That the series \ $\psi^{a, b}_s (\tau)$ indeed converges when $\tau$ is a
real algebraic irrationality and $s \geq \max (a, b, 1) + 1$ is proved in
Section~\ref{sec:conv}.

Note that Theorem~\ref{thm:eval} includes as a special case the recent
conjecture \cite{lrr-secantzeta} of M.~Lal{\'i}n, F.~Rodrigue and M.~Rogers
that, for even $s > 0$ and all rational $r > 0$, the values
\begin{equation*}
  \psi_s^{1, 0} ( \sqrt{r}) = \sum_{n = 1}^{\infty} \frac{\sec (\pi n
   \sqrt{r})}{n^s}
\end{equation*}
are rational multiples of $\pi^s$. Independent proofs of this conjecture have
been given by P.~Charollois and M.~Greenberg \cite{cg-secant} as well as
B.~Berndt and A.~Straub \cite{bs-secantsums}.

\begin{example}
  We record some random examples to illustrate Theorem~\ref{thm:eval}:
  \begin{eqnarray*}
    \sum_{n = 1}^{\infty} \frac{\sec^2 (\pi n \sqrt{5})}{n^4} & = &
    \frac{14}{135} \pi^4,\\
    \sum_{n = 1}^{\infty} \frac{\cot^2 (\pi n \sqrt{5})}{n^4} & = &
    \frac{13}{945} \pi^4,\\
    \sum_{n = 1}^{\infty} \frac{\csc^2 (\pi n \sqrt{11})}{n^4} & = &
    \frac{8}{385} \pi^4,\\
    \sum_{n = 1}^{\infty} \frac{\sec^3 (\pi n \sqrt{2})}{n^4} & = & -
    \frac{2483}{5220} \pi^4,\\
    \sum_{n = 1}^{\infty} \frac{\tan^3 (\pi n \sqrt{6})}{n^5} & = & \frac{35,
    159}{17, 820 \sqrt{6}} \pi^4 .
  \end{eqnarray*}
  These values have been obtained by tracing the proof of
  Theorem~\ref{thm:eval}, which provides a method to compute such evaluations.
  
  In addition, the evaluation
  \begin{equation*}
    \psi_3^{- 2, 1} ( \sqrt{2}) = \sum_{n = 1}^{\infty} \frac{(\cos \cot)
     (\pi n \sqrt{2})}{n^3} = \left[ \frac{1}{2} - \frac{253}{360 \sqrt{2}}
     \right] \pi^3
  \end{equation*}
  illustrates that the condition $a + b \geq 0$ is required for the last
  part of Theorem~\ref{thm:eval}.
\end{example}

\begin{remark}
  \label{rk:H:troubles}Theorem~\ref{thm:eval} is stated for real quadratic
  irrationalities $\rho$ only. Its proof, however, extends to certain nonreal
  $\rho$ which are the fixed points of linear fractional transformations
  $\gamma \in \operatorname{SL}_2 (\mathbb{Z})$. For instance, the evaluation of
  $\psi^{- 2, 2}_4 (\zeta_3)$ in equation \eqref{eq:cot2:kmt} can be achieved
  by our method because $\zeta_3$ is fixed by $S T$, defined in \eqref{eq:TS},
  and because the cotangent Dirichlet series is an Eichler integral for the
  full modular group. Similarly, formula \eqref{eq:cot:i:7}, and more
  generally \eqref{eq:cot:i}, follow by evaluating $\psi_{4 r - 1}^{- 1, 1}
  (i)$ as in the proof of Theorem~\ref{thm:eval}.
  
  There are, however, two complications for these (and other) nonreal values.
  Firstly, for instance, while $\zeta_3$ is fixed by $\pm S T$ and $\pm (S
  T)^2$, it is easily seen from \eqref{eq:fix} that $\zeta_3$ is not fixed by
  any other nontrivial linear fractional transformation. In particular, it is
  not fixed by any transformation in $\Gamma (2)$. In consequence, we cannot
  apply our approach to evaluate the secant Dirichlet series $\psi_s^{1, 0}
  (\zeta_3)$ for any $s$ (and, to our knowledge, no evaluation as an algebraic
  multiple of $\pi^s$ is known).
  
  Secondly, the quantity \eqref{eq:div0}, which we divide by, can be zero.
  This is the reason why we can evaluate $\psi^{- 2, 2}_s (\zeta_3)$ only for
  $s$ of the form $s = 6 r + 4$. These are exactly the cases for which these
  series are evaluated in \cite[Corollary 6.4]{kmt-barnes} by different
  means.
\end{remark}

\begin{example}
  \label{eg:rama}Besides \eqref{eq:cot} or \eqref{eq:cot2:kmt}, other types of
  natural trigonometric Dirichlet series have been considered by many authors.
  For instance, Ramanujan recorded
  \begin{equation*}
    \sum_{n = 0}^{\infty} \frac{\tanh ((2 n + 1) \pi / 2)}{(2 n + 1)^3} =
     \frac{\pi^3}{32}, \hspace{1em} \sum_{n = 1}^{\infty} \frac{(- 1)^{n + 1}
     \operatorname{csch} (\pi n)}{n^3} = \frac{\pi^3}{360},
  \end{equation*}
  as well as
  \begin{equation*}
    \sum_{n = 1}^{\infty} \frac{\chi (n) \operatorname{sech} (\pi n / 2)}{n^5} =
     \frac{\pi^5}{768},
  \end{equation*}
  where $\chi = ( \tfrac{- 4}{\cdot})$ denotes the nonprincipal Dirichlet
  character modulo $4$ (that is, $\chi (n) = 0$ for even $n$, and $\chi (n) =
  (- 1)^{(n - 1) / 2}$ for odd $n$). These formulas can be found in
  \cite[Entry 14.25]{berndtII} along with their history and generalizations.
  Since it might not be immediately obvious, let us indicate in
  \eqref{eq:csc:r}, \eqref{eq:tan:r} and \eqref{eq:sec:r} how these series
  relate to the series $\psi^{a, b}_s$ that we consider here. In particular,
  this demonstrates that the approach of Theorem~\ref{thm:eval} applies to
  establishing the corresponding evaluations given by Ramanujan.
  
  To begin with, since $\csc ( z + \pi n) = ( - 1)^n \csc ( z)$, we find
  \begin{equation}
    \sum_{n = 1}^{\infty} \frac{(- 1)^{n + 1} \csc (\pi n \tau)}{n^s} = -
    \psi^{0, 1}_s (\tau + 1) . \label{eq:csc:r}
  \end{equation}
  Next, note that $\cot (z + \frac{\pi n}{2})$ equals $\cot (z)$ if $n$ is
  even, and $- \tan (z)$ if $n$ is odd. Consequently,
  \begin{equation*}
    \sum_{n = 1}^{\infty} \frac{\cot (\pi n (\tau + \frac{1}{2}))}{n^s} =
     \frac{1}{2^s} \sum_{n = 1}^{\infty} \frac{\cot (2 \pi n \tau)}{n^s} -
     \sum_{n = 0}^{\infty} \frac{\tan (\pi (2 n + 1) \tau)}{(2 n + 1)^s},
  \end{equation*}
  or, equivalently,
  \begin{equation}
    \sum_{n = 0}^{\infty} \frac{\tan (\pi (2 n + 1) \tau)}{(2 n + 1)^s} =
    \frac{1}{2^s} \psi^{- 1, 1}_s ( 2 \tau) - \psi^{- 1, 1}_s \left( \tau +
    \tfrac{1}{2} \right) . \label{eq:tan:r}
  \end{equation}
  Explicit evaluations of the series \eqref{eq:tan:r} for certain real
  quadratic irrationalities $\tau$ have been obtained in \cite[Theorem
  4.11]{berndt-series78}. Theorem~\ref{thm:eval} shows, less explicitly, that
  such evaluations are possible for all real quadratic irrationalities $\tau$.
  
  Similarly, $\csc (z + \frac{\pi n}{2})$ equals $(- 1)^{n / 2} \csc (z)$ if
  $n$ is even, and $(- 1)^{(n - 1) / 2} \sec (z)$ if $n$ is odd. We thus
  conclude
  \begin{equation}
    \sum_{n = 1}^{\infty} \frac{\chi (n) \sec (\pi n \tau)}{n^s} = \psi_s^{0,
    1} \left( \tau + \tfrac{1}{2} \right) - \frac{1}{2^s} \psi_s^{0, 1} (2
    \tau + 1) . \label{eq:sec:r}
  \end{equation}
  For each of the series \eqref{eq:csc:r}, \eqref{eq:tan:r} and
  \eqref{eq:sec:r}, Theorem~\ref{thm:eval} proves that, depending on parity,
  they evaluate at real quadratic irrationalities $\tau$ as multiples of
  $\pi^s$. We conclude with some simple explicit examples:
  \begin{eqnarray*}
    \sum_{n = 1}^{\infty} \frac{(- 1)^{n + 1} \csc (\pi n \sqrt{13})}{n^3} & =
    & - \frac{\pi^3}{12 \sqrt{13}},\\
    \sum_{n = 0}^{\infty} \frac{\tan (\pi (2 n + 1) \sqrt{5})}{(2 n + 1)^5} &
    = & \frac{23 \pi^5}{3456 \sqrt{5}},\\
    \sum_{n = 1}^{\infty} \frac{\chi (n) \sec (\pi n \sqrt{7})}{n^3} & = & -
    \frac{7 \pi^3}{96} .
  \end{eqnarray*}
\end{example}

\section{Convergence}\label{sec:conv}

Let $a \geq 0$ and $b \geq 0$ and assume that at least one of them
is positive. Then the series
\begin{equation*}
  \psi^{a, b}_s (\tau) = \sum_{n = 1}^{\infty} \frac{(\sec^a \csc^b) (\pi n
   \tau)}{n^s}
\end{equation*}
converges absolutely for all nonreal $\tau$. For rational $\tau$, the series
$\psi^{a, b}_s (\tau)$ converges absolutely for $s > 1$ provided that all its
terms are finite; this requires $b \leq 0$ and, in addition, $\tau$ needs
to have odd denominator if $a > 0$. Convergence for real irrationalities
$\tau$, on the other hand, is a much more subtle question; see, for instance,
\cite{rivoal-conv12}.

It is shown in \cite[Theorem 1]{lrr-secantzeta} that the secant Dirichlet
series $\psi_s^{1, 0} (\tau)$ converges absolutely for algebraic irrational
$\tau$ whenever $s \geq 2$. While the case $s > 2$ follows from an
application of the Thue-Siegel-Roth Theorem, the case $s = 2$ requires a
rather subtle argument due to Florian Luca. The next result generalizes these
conclusions to $\psi_s^{a, b} (\tau)$ for any integers $a, b$. In addition, it
strengthens \cite[Theorem 5.1]{berndt-dedekindsum76}, which proves a weaker
result in the case $(a, b) = (- 1, 1)$.

\begin{theorem}
  Let $a, b$ be integers and $\tau$ real. The series $\psi^{a, b}_s (\tau)$
  converges absolutely
  \begin{enumerate}
    \item for $s > 1$, if $a \leq 0$ and $b \leq 0$;
    
    \item for $s \geq \max (a, b) + 1$, if $\tau$ is algebraic irrational
    and $\max (a, b) > 0$.
  \end{enumerate}
\end{theorem}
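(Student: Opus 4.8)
The plan is as follows. Part (1) is immediate: if $a\le 0$ and $b\le 0$, then $\operatorname{trig}^{a,b}(\pi n\tau)=\cos^{-a}(\pi n\tau)\,\sin^{-b}(\pi n\tau)$ has absolute value at most $1$, so $\psi^{a,b}_s(\tau)$ is dominated termwise by $\zeta(s)$, which is finite for $s>1$.

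For part (2) I would first reduce everything to one clean series. Put $c=\max(a,b)\ge 1$. Since $\cos^2+\sin^2=1$, at least one of $\lvert\cos(\pi n\tau)\rvert$, $\lvert\sin(\pi n\tau)\rvert$ is $\ge 1/\sqrt2$, and using this together with $\min(a,b)\le c$ one obtains a pointwise estimate $\lvert\operatorname{trig}^{a,b}(\pi n\tau)\rvert\le C\bigl(\lvert\sec(\pi n\tau)\rvert^{c}+\lvert\csc(\pi n\tau)\rvert^{c}\bigr)$ with $C$ depending only on $a,b$. Next, from the elementary inequality $\lvert\sin(\pi x)\rvert\ge 2\lVert x\rVert$, where $\lVert x\rVert$ is the distance from $x$ to $\mathbb Z$, one gets $\lvert\csc(\pi n\tau)\rvert\le\tfrac12\lVert n\tau\rVert^{-1}$, and, writing $\cos(\pi x)=\sin(\pi(x+\tfrac12))$ and using $\lVert 2y\rVert\le 2\lVert y\rVert$, also $\lvert\sec(\pi n\tau)\rvert\le\lVert 2n\tau\rVert^{-1}$. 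As $2\tau$ is again algebraic irrational and $\lVert 2n\tau\rVert=\lVert n(2\tau)\rVert$, both tails take the shape $\sum_{n\ge 1}n^{-s}\lVert n\theta\rVert^{-c}$ with $\theta\in\{\tau,2\tau\}$. So it suffices to prove: for $\theta$ algebraic irrational and $c\ge 1$ an integer, $\sum_{n\ge 1}n^{-(c+1)}\lVert n\theta\rVert^{-c}<\infty$ (the endpoint $s=c+1$; larger $s$ then follows since the terms are nonnegative and nonincreasing in $s$).

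To prove this endpoint claim I would decompose $\sum_{n\ge 1}n^{-(c+1)}\lVert n\theta\rVert^{-c}$ into blocks $q_k\le n<q_{k+1}$, where $p_k/q_k$ runs through the convergents of $\theta$. On such a block $n\ge q_k$, so it is enough to bound $\sum_{q_k\le n<q_{k+1}}\lVert n\theta\rVert^{-c}$. By the three-distance theorem the $q_{k+1}$ points $n\theta\bmod 1$ with $0\le n<q_{k+1}$ cut the circle into arcs of length $\ge 1/(2q_{k+1})$, whence $\#\{\,n<q_{k+1}:\lVert n\theta\rVert<\delta\,\}\ll q_{k+1}\delta+1$; a dyadic decomposition in the value of $\lVert n\theta\rVert$ then yields $\sum_{q_k\le n<q_{k+1}}\lVert n\theta\rVert^{-c}\ll q_{k+1}^{c}\log q_{k+1}$. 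Now Roth's theorem forces $q_{k+1}=O_{\epsilon}(q_k^{1+\epsilon})$ for every $\epsilon>0$, so the $k$-th block contributes $\ll q_k^{-(c+1)}q_{k+1}^{c}\log q_{k+1}\ll q_k^{-1+2c\epsilon}$; since the $q_k$ grow at least geometrically, choosing $\epsilon<1/(2c)$ makes $\sum_k q_k^{-1+2c\epsilon}$ converge, finishing the endpoint case. (For $s>c+1$ essentially nothing is needed: the bare bound $\lVert n\theta\rVert\gg_{\epsilon}n^{-1-\epsilon}$ already gives $n^{-s}\lVert n\theta\rVert^{-c}\ll n^{-s+c+c\epsilon}$, summable once $\epsilon$ is small.)

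The main obstacle is precisely this endpoint $s=\max(a,b)+1$. For each fixed $\epsilon>0$ the elementary Roth estimate only delivers convergence for $s>\max(a,b)+1+c\epsilon$, so hitting the sharp exponent forces one to control the arithmetic of the indices $n$ for which $\lVert n\theta\rVert$ is abnormally small — concretely, their clustering near multiples of the convergent denominators $q_k$, which is what the three-distance theorem supplies above. In the special case $(a,b)=(1,0)$, $s=2$ this is exactly the step handled by Luca's argument mentioned earlier. Everything else — the pointwise trigonometric reduction, the inequality $\lvert\sin(\pi x)\rvert\ge 2\lVert x\rVert$, and the dyadic and partial summations — is routine.
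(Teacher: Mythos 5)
Your proof is correct, but the heart of part (2) takes a genuinely different route from the paper. The preliminary reduction is the same in substance: the paper iterates the identity $\operatorname{trig}^{a,b}=\operatorname{trig}^{a-2,b}+\operatorname{trig}^{a,b-2}$ (from $\sec^2\csc^2=\sec^2+\csc^2$) to reduce to the pure series $\psi^{\lambda,0}_s$ and $\psi^{0,\lambda}_s$ with $\lambda\le\max(a,b)$, while your pointwise bound $|\operatorname{trig}^{a,b}|\le C(|\sec|^c+|\csc|^c)$ accomplishes the same in one step; likewise your substitution $\theta=2\tau$ for the secant sum is a clean alternative to the paper's direct treatment of the approximations $(2k_n-1)/(2n)$ in Lemma~\ref{lem:conv:sec}. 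The real divergence is in the endpoint estimate $\sum_n n^{-(c+1)}\|n\theta\|^{-c}<\infty$: the paper, generalizing Luca's argument from \cite{lrr-secantzeta}, isolates the indices with $|\tau-k_n/n|<(\log n)^{\alpha}/n^2$ and invokes Worley's theorem to show there are at most $O(\ell(\log q_{\ell})^{2\alpha})$ such indices below $q_{\ell}$, then applies Thue--Siegel--Roth to each surviving term; you instead work block-by-block on $q_k\le n<q_{k+1}$, use the minimal-gap property of the points $\{n\theta\}$ (namely $\min_{1\le j<q_{k+1}}\|j\theta\|=\|q_k\theta\|>1/(2q_{k+1})$, which is the quantitative content you attribute to the three-distance theorem but is really the best-approximation property of convergents) to get the count $\#\{n<q_{k+1}:\|n\theta\|<\delta\}\ll q_{k+1}\delta+1$, and sum dyadically to obtain $\sum_{n<q_{k+1}}\|n\theta\|^{-c}\ll q_{k+1}^{c}\log q_{k+1}$. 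Both arguments finish identically: Roth gives $q_{k+1}\ll_{\varepsilon}q_k^{1+\varepsilon}$ and the at-least-geometric growth of $q_k$ closes the sum. Your version avoids Worley's theorem entirely and is the more classical treatment of sums of the type $\sum_n n^{-s}\|n\theta\|^{-c}$; the paper's version stays closer to the published convergence proof for the secant zeta function, which is what it is explicitly extending.
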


\begin{proof}
  The first part is obvious because cosine and sine are bounded on the real
  line, so that, for $a \leq 0$ and $b \leq 0$, $\psi^{a, b}_s
  (\tau)$ can be bounded from above by the Riemann zeta function $\zeta (s)$.
  
  For the second part, note that $\sec^2 (z) \csc^2 (z) = \sec^2 (z) + \csc^2
  (z)$ implies the simple reduction identity
  \begin{equation}
    \operatorname{trig}^{a, b} (z) = \operatorname{trig}^{a - 2, b} (z) + \operatorname{trig}^{a, b -
    2} (z) . \label{eq:trig:red2}
  \end{equation}
  Applying \eqref{eq:trig:red2} recursively and again using boundedness of
  cosine and sine, our claim follows if we can show that, for $\lambda > 0$
  and algebraic irrational $\tau$, the series
  \begin{equation*}
    \psi^{\lambda, 0}_s (\tau) = \sum_{n = 1}^{\infty} \frac{\sec^{\lambda}
     (\pi n \tau)}{n^s}, \hspace{1em} \psi^{0, \lambda}_s (\tau) = \sum_{n =
     1}^{\infty} \frac{\csc^{\lambda} (\pi n \tau)}{n^s}
  \end{equation*}
  converge absolutely whenever $s \geq \lambda + 1$. These claims are
  proved in Lemmas~\ref{lem:conv:csc} and \ref{lem:conv:sec} below.
\end{proof}

In the same manner as in \cite{lrr-secantzeta}, we will use the following
weak version of a result due to Worley \cite{worley81}. Here and in the
sequel, $p_n / q_n$ denotes the $n$th convergent of the continued fraction
expansion $[a_0 ; a_1, a_2, \ldots]$ of $\tau$. It is well-known that $p_n$
and $q_n$ satisfy
\begin{equation}
  p_n = a_n p_{n - 1} + p_{n - 2}, \hspace{1em} q_n = a_n q_{n - 1} + q_{n -
  2} . \label{eq:pq:rec}
\end{equation}
\begin{theorem}
  \label{thm:worley}Let $\tau$ be irrational, $k > \frac{1}{2}$, and $p / q$ a
  rational approximation to $\tau$ in reduced form for which
  \begin{equation*}
    \left| \tau - \frac{p}{q} \right| < \frac{k}{q^2} .
  \end{equation*}
  Then $p / q$ is of the form
  \begin{equation*}
    \frac{p}{q} = \frac{a p_m + b p_{m - 1}}{a q_m + b q_{m - 1}},
     \hspace{1em} |a|, |b| < 2 k,
  \end{equation*}
  where $a$ and $b$ are integers, and $m$ is the largest index up to which the
  continued fractions of $\tau$ and $p / q$ agree.
\end{theorem}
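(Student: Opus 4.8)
The plan is to carry out the classical continued-fraction argument behind Worley's estimate. Write $\tau_n = [a_n; a_{n+1}, a_{n+2}, \dots]$ for the complete quotients of $\tau$, so that $\tau_n > 1$ for $n \geq 1$ and, by \eqref{eq:pq:rec}, $\tau = \frac{\tau_{n+1} p_n + p_{n-1}}{\tau_{n+1} q_n + q_{n-1}}$ for every $n \geq 0$. First I would dispose of the trivial case: if $p/q$ is itself a convergent $p_m/q_m$ of $\tau$, take $(a, b) = (1, 0)$, the bound $|a| = 1 < 2k$ being exactly where the hypothesis $k > \tfrac12$ enters. So assume from now on that $p/q$ is not a convergent.

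Expand $p/q$ as a finite continued fraction (picking, in the usual ambiguous case, the expansion that agrees longest with that of $\tau$), let $m$ be the last index at which it agrees with the expansion of $\tau$, and let $c_{m+1} \neq a_{m+1}$ be the partial quotient of $p/q$ in position $m+1$. Put $\theta = [c_{m+1}; c_{m+2}, \dots]$, a rational number $\geq 1$, written as $\theta = u/v$ in lowest terms with $u \geq v \geq 1$. The convergent recursion gives $p/q = \frac{u p_m + v p_{m-1}}{u q_m + v q_{m-1}}$, and because $p_m q_{m-1} - p_{m-1} q_m = \pm 1$ this fraction is already reduced, so $q = u q_m + v q_{m-1} = v(\theta q_m + q_{m-1})$. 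Subtracting the two mediant expressions for $\tau$ and $p/q$ and invoking $p_m q_{m-1} - p_{m-1} q_m = \pm 1$ once more yields the master identity
\[
  \left| \tau - \frac{p}{q} \right| = \frac{|\tau_{m+1} - \theta|}{(\tau_{m+1} q_m + q_{m-1})(\theta q_m + q_{m-1})},
\]
and feeding this into the hypothesis $|\tau - p/q| < k/q^2$, together with $q = v(\theta q_m + q_{m-1})$, collapses everything to the single inequality
\[
  v^2 \, |\tau_{m+1} - \theta| \, (\theta q_m + q_{m-1}) < k\,(\tau_{m+1} q_m + q_{m-1}).
\]

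From the master inequality I would now read off the bounds. If the coefficients $u, v$ of the above representation already satisfy $u, v < 2k$, we are done. Otherwise $\theta = u/v$ is large, and I would re-express $p/q$ against the next pair of convergents: using $p_{m-1} = p_{m+1} - a_{m+1} p_m$ and $q_{m-1} = q_{m+1} - a_{m+1} q_m$,
\[
  \frac{p}{q} = \frac{p_{m+1} + (\theta - a_{m+1}) p_m}{q_{m+1} + (\theta - a_{m+1}) q_m},
\]
and the master identity re-centred at index $m+1$ forces the coefficients of this (reduced) representation — which is then the one we report — to be $O(k)$, and in fact below $2k$. The step I expect to be the main obstacle is exactly the quantitative bookkeeping in this dichotomy: ruling out a large $\theta$ uses that $\theta$, whose continued fraction begins with $c_{m+1} \neq a_{m+1}$, cannot be an unusually good approximation to $\tau_{m+1}$, and pinning the final constant down to precisely $2k$ uniformly for every $k > \tfrac12$ (rather than only for $k \geq 1$) forces one to carry the lower-order terms $q_{m-1}/q_m$ through all the estimates instead of discarding them, with a few small denominators requiring a direct argument — the same flavour of subtlety that, in the companion convergence statement, needed Luca's argument for $s = 2$.
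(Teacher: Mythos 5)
You should first note that the paper itself offers no proof of this statement: it is imported, in weakened form, from Worley's paper \cite{worley81} (as is also done in \cite{lrr-secantzeta}), so the only meaningful comparison is with Worley's original argument. Your setup reproduces the correct classical machinery, and your ``master identity'' is right: with $\theta = [c_{m+1}; c_{m+2}, \dots] = u/v$ one has $q = v(\theta q_m + q_{m-1})$ and
\begin{equation*}
  \left| \tau - \frac{p}{q} \right| = \frac{|\tau_{m+1} - \theta|}{(\tau_{m+1} q_m + q_{m-1})(\theta q_m + q_{m-1})},
\end{equation*}
which turns the hypothesis into $v^2 |\tau_{m+1}-\theta|(\theta q_m + q_{m-1}) < k(\tau_{m+1}q_m + q_{m-1})$. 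But the argument stops exactly where the theorem begins. The entire content of the statement is the deduction $u, v < 2k$ from this inequality, and you explicitly defer that step as ``the main obstacle,'' offering only the heuristic that $\theta$ cannot approximate $\tau_{m+1}$ too well because $c_{m+1} \neq a_{m+1}$. That lower bound on $|\tau_{m+1}-\theta|$ genuinely degenerates (for instance when $c_{m+1} = a_{m+1}+1$ and $\tau_{m+2}$ is large), and controlling it via a case analysis on $c_{m+1}$ versus $a_{m+1}$ and on the size of $v$ is precisely where Worley's proof does its work. As written, this is a plan rather than a proof.

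There is also a structural problem with your dichotomy. Because $p_m q_{m-1} - p_{m-1} q_m = \pm 1$, the integers $a, b$ satisfying $p = a p_m + b p_{m-1}$ and $q = a q_m + b q_{m-1}$ are \emph{uniquely determined} once $m$ is fixed, and the theorem fixes $m$ as the largest index of agreement; in the non-convergent case these forced coefficients are exactly your $u$ and $v$. Re-expressing $p/q$ against the basis $(p_{m+1}, p_m)$, i.e.\ as $\bigl(v p_{m+1} + (u - a_{m+1} v) p_m\bigr)/\bigl(v q_{m+1} + (u - a_{m+1} v) q_m\bigr)$, produces coefficients relative to the \emph{wrong} pair of convergents and so proves a different statement. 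There is no freedom to choose a more convenient representation when $u$ or $v$ is large: you must show directly that the forced $u, v$ are both below $2k$, and that estimate is the missing core of the proof.
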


The next result is proved by a natural extension of the proof of
\cite[Theorem 1]{lrr-secantzeta}, which is due to Florian Luca. As indicated
in Remark~\ref{rk:conv:csc}, absolute convergence of $\sum_{n = 1}^{\infty}
\frac{\csc^{\lambda} (\pi n \tau)}{n^s}$ for $s > \lambda + 1$ is much simpler
to deduce.

\begin{lemma}
  \label{lem:conv:csc}Let $\lambda > 0$ and $\tau$ be algebraic irrational.
  Then the series $\psi^{0, \lambda}_{\lambda + 1} (\tau) = \sum_{n =
  1}^{\infty} \frac{\csc^{\lambda} (\pi n \tau)}{n^{\lambda + 1}}$ converges
  absolutely.
\end{lemma}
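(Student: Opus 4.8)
The plan is to bound $|\csc(\pi n \tau)|$ in terms of the distance from $n\tau$ to the nearest integer. Write $\|x\|$ for the distance from $x$ to $\mathbb{Z}$. Since $|\sin(\pi x)| \geq 2\|x\|$ for all real $x$, we have $|\csc^\lambda(\pi n \tau)| \leq (2\|n\tau\|)^{-\lambda}$, so it suffices to show that $\sum_{n\geq 1} n^{-\lambda-1}\|n\tau\|^{-\lambda}$ converges. The terms where $\|n\tau\|$ is not too small, say $\|n\tau\| \geq n^{-1}$, contribute at most $\sum_n n^{-\lambda-1} n^{\lambda} = \sum_n n^{-1}$ — wait, that diverges, so one must be more careful: group $n$ dyadically, $N \leq n < 2N$, and within each block control how many $n$ can have $\|n\tau\|$ small. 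This is exactly where Worley's theorem (Theorem~\ref{thm:worley}) enters.

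The key steps, in order. First, for $n$ in a dyadic block $N \leq n < 2N$, suppose $\|n\tau\| < k/N$ for some threshold $k > 1/2$; then the rational $p/q$ (with $q = n$, $p$ the nearest integer to $n\tau$, reduced to lowest terms, so $q \mid n$ and $q \leq n < 2N$) satisfies $|\tau - p/q| < k/(qN) \leq k/q \cdot (2/q)$; after adjusting constants this puts us in the hypothesis of Worley's theorem, so $q = a q_m + b q_{m-1}$ with $|a|,|b| < 2k$, where $m$ is controlled by the size of $q$. Second, since $q_m$ grows at least exponentially (indeed $q_{m}\ge F_{m}$, a Fibonacci bound, from \eqref{eq:pq:rec}), only boundedly many indices $m$ are relevant for $q \asymp N$, and for each such $m$ there are at most $O(k^2)$ choices of the pair $(a,b)$, hence at most $O(k^2)$ possible denominators $q$; each such $q$ divides at most $O(N/q)$ many $n < 2N$. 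Third — and this is the step that uses that $\tau$ is \emph{algebraic} rather than merely irrational — one needs a lower bound $\|n\tau\| \gg n^{-1}$ coming from the Thue–Siegel–Roth theorem when $\lambda+1 > 2$, i.e. $\lambda > 1$: Roth gives $\|n\tau\| > c_\varepsilon n^{-1-\varepsilon}$, so $n^{-\lambda-1}\|n\tau\|^{-\lambda} \ll n^{-\lambda-1+\lambda+\lambda\varepsilon} = n^{-1+\lambda\varepsilon}$, which is summable once $\varepsilon < 1/\lambda$ — but $-1+\lambda\varepsilon < -1$ fails, so again Roth alone is not enough and the dyadic counting must be combined with it. The correct combination: in block $N\leq n<2N$, split off the $n$ with $\|n\tau\|\geq k/N$, which contribute $\ll N\cdot N^{-\lambda-1}(N/k)^{\lambda} = k^{\lambda}/N$ summed to $\ll k^\lambda$ total over one scale — no, over all dyadic scales this is $\ll k^{\lambda}\log$, still not obviously finite, so one lets $k$ itself vary dyadically too: partition $\{n : N\leq n<2N\}$ according to $2^{-j-1} \leq \|n\tau\|/(1/N) < 2^{-j}$ i.e. $\|n\tau\|\asymp 2^{-j}/N$, apply Worley with $k\asymp 2^{-j}$ (valid only while $2^{-j} > 1/2$, i.e. small $j$; for large $j$, Roth caps $j$ at $j \ll \varepsilon\log N$), count $\ll 2^{-2j} \cdot N/(2^{-j}N)= 2^{-2j}\cdot ?$ — the bookkeeping here is the crux and must be done carefully so that the double sum over $N$ (dyadic) and $j$ converges precisely when $s = \lambda+1$.

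The main obstacle I expect is exactly this endpoint bookkeeping: unlike the case $s > \lambda+1$ (Remark~\ref{rk:conv:csc}), where a crude bound suffices, at $s = \lambda+1$ the Worley-counting contribution and the small-$\|n\tau\|$ tail are each only borderline summable, so one has to use the full strength of Worley's theorem (the factor-of-$k$ bound on $|a|,|b|$ giving a $k^2$, not $k^{O(1)}$, count of denominators) together with Roth's theorem to truncate the range of $j$, and arrange the constants so the geometric series in $j$ beats the number of dyadic scales. Concretely, for $s=\lambda+1=2$ (the hardest case, matching Luca's argument for the secant series in \cite{lrr-secantzeta}) one cannot invoke Roth at all and must instead exploit that, by Worley, within a dyadic block the admissible denominators $q$ are of the form $aq_m+bq_{m-1}$ with $m$ essentially fixed, so the sum $\sum_{n}\|n\tau\|^{-1}$ over the block telescopes against $\sum_{(a,b)} 1/|a\{q_m\tau\}+b\{q_{m-1}\tau\}|$, which is handled by the standard three-distance/continued-fraction estimate $|q_m\tau - p_m| \asymp 1/q_{m+1}$. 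I would first write out the proof for general $\lambda$ assuming $s>\lambda+1$ to fix notation and the dyadic framework, then specialize to $s=\lambda+1$ and insert the Worley/Roth refinement, following the template of \cite{lrr-secantzeta} closely and citing Luca's argument for the delicate endpoint.
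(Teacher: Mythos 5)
Your plan assembles the right ingredients --- the bound $|\csc(\pi n\tau)|\leq \|n\tau\|^{-1}$ in terms of the distance to the nearest integer, Worley's Theorem~\ref{thm:worley} to count the $n$ with $\|n\tau\|$ abnormally small, the Thue--Siegel--Roth theorem for lower bounds, and the exponential growth of the convergent denominators $q_\ell$ --- and these are precisely the tools the paper uses. But the proposal is a plan, not a proof: at the decisive point you write that ``the bookkeeping here is the crux and must be done carefully'' and leave the double sum over dyadic scales in $n$ and in $\|n\tau\|$ unresolved. The device that actually closes the endpoint case in the paper is one you do not find: the threshold separating ``harmless'' from ``dangerous'' $n$ is taken to be $|\tau-k_n/n|\geq(\log n)^{\alpha}/n^{2}$ with $\alpha\lambda>1$, not a constant multiple of $1/n^{2}$. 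On the harmless side this turns the borderline sum $\sum 1/n$ into $\sum 1/(n(\log n)^{\alpha\lambda})$, which converges; on the dangerous side, Worley applied with $k=(\log q_\ell)^{\alpha}/d^{2}$ (where $d=(k_n,n)$) shows there are at most $16\,\ell(\log q_\ell)^{2\alpha}$ dangerous $n$ below $q_\ell$, while Roth --- via the best-approximation property of convergents and the consequence $q_{\ell+1}<C q_\ell^{1+\varepsilon}$ --- bounds each dangerous term with $q_{\ell-1}\leq n<q_\ell$ by $C/q_{\ell-1}^{1-\varepsilon'}$. The polylogarithmic count against the power saving, together with $q_\ell\gg\varphi^{\ell}$, gives convergence. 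Without some such logarithmic separation, your dyadic scheme really is only borderline in both pieces, which is exactly the obstruction you kept running into.

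There is also a concrete misstep: you assert that for $s=\lambda+1=2$ ``one cannot invoke Roth at all'' and propose a three-distance/telescoping substitute. That is not how the paper (or Luca's original argument for the secant series) proceeds: Roth is invoked for every $\lambda>0$, including $\lambda=1$; the subtlety at the endpoint is not that Roth is unavailable but that Roth alone only yields terms of size $n^{-1+\lambda\varepsilon}$, which is why it must be combined with the Worley count of dangerous indices rather than replaced by a different argument. Finally, note that the reduced denominator $q$ of $k_n/n$ need not equal $n$; the paper absorbs the gcd $d$ into the parameter of Worley's theorem and then writes $n=rq_m+sq_{m-1}$ with $|r|,|s|<2(\log q_\ell)^{\alpha}$ directly, a step your divisor-counting sketch would still need to carry out.
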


\begin{proof}
  Starting with the elementary
  \begin{equation*}
    | \sin (\pi \tau) | \geq | \tau - k|,
  \end{equation*}
  where $k = [\tau]$ is the nearest integer to $\tau$, we obtain
  \begin{equation*}
    \frac{| \csc (\pi n \tau) |^{\lambda}}{n^{\lambda + 1}} \leq
     \frac{1}{n^{\lambda + 1} |n \tau - k_n |^{\lambda}} = \frac{1}{n^{2
     \lambda + 1} | \tau - k_n / n|^{\lambda}}
  \end{equation*}
  with $k_n = [n \tau]$, which is the integer maximizing the right-hand side.
  We first consider those indices $n$ for which the right-hand side is
  sufficiently small. Indeed, we notice that our series restricted to the
  indices in the set
  \begin{equation*}
    W_{\tau} = \left\{ n \geq 0 : \hspace{1em} \left| \tau -
     \frac{k_n}{n} \right| \geq \frac{(\log n)^{\alpha}}{n^2} \right\}
  \end{equation*}
  is easily seen to converge when we choose $\alpha$ large enough; namely,
  \begin{equation*}
    \sum_{n \in W_{\tau}} \frac{| \csc (\pi n \tau) |^{\lambda}}{n^{\lambda +
     1}} \leq \sum_{n \in W_{\tau}} \frac{1}{n^{2 \lambda + 1} | \tau -
     k_n / n|^{\lambda}} \leq \sum_{n \in W_{\tau}} \frac{1}{n (\log
     n)^{\alpha \lambda}} < \infty
  \end{equation*}
  provided that $\alpha \lambda > 1$. In the sequel, we assume that $\alpha$
  has been chosen such that $\alpha \lambda > 1$.
  
  On the other hand, assume that $n \nin W_{\tau}$, in which case
  \begin{equation}
    \left| \tau - \frac{k_n}{n} \right| \leq \frac{(\log
    n)^{\alpha}}{n^2} . \label{eq:nnotinW}
  \end{equation}
  Let $p_m / q_m$ be the convergents of $\tau$ and let $\ell$ be such that $n
  < q_{\ell}$. Then Worley's Theorem~\ref{thm:worley} applied with $k = (\log
  q_{\ell})^{\alpha} / d^2$, where $d = (k_n, n)$, shows that
  \begin{equation*}
    \frac{k_n}{n} = \frac{a p_m + b p_{m - 1}}{a q_m + b q_{m - 1}}
  \end{equation*}
  where $m < \ell$ and $a, b$ are integers with $|a|, |b| < 2 (\log
  q_{\ell})^{\alpha} / d^2$. In particular, $n = d (a q_m + b q_{m - 1}) = r
  q_m + s q_{m - 1},$ where $m < \ell$ and $r, s$ are integers with $|r|, |s|
  < 2 (\log q_{\ell})^{\alpha}$. We conclude that there can be at most $16
  \ell (\log q_{\ell})^{2 \alpha}$ values of $n$ less than $q_{\ell}$ for
  which \eqref{eq:nnotinW} holds.
  
  Since $\tau$ is algebraic irrational, the Thue--Siegel--Roth Theorem implies
  that, for each $\varepsilon > 0$, there is a constant $C (\tau,
  \varepsilon)$ such that
  \begin{equation}
    \left| \tau - \frac{p}{q} \right| > \frac{C (\tau, \varepsilon)}{q^{2 +
    \varepsilon}} \label{eq:tsr}
  \end{equation}
  for all fractions $p / q$. On the other hand, we have
  \begin{equation*}
    \frac{1}{q_{\ell} q_{\ell + 1}} > \left| \tau - \frac{p_{\ell}}{q_{\ell}}
     \right|,
  \end{equation*}
  which combined with the Thue--Siegel--Roth Theorem shows that $q_{\ell + 1}
  < C (\tau, \varepsilon) q_{\ell}^{1 + \varepsilon}$. Because the convergents
  $p_{\ell} / q_{\ell}$ of continued fractions provide best possible
  approximations to $\tau$ among fractions with denominator at most
  $q_{\ell}$, we find that, for $n < q_{\ell}$,
  \begin{equation*}
    \left| \tau - \frac{k_n}{n} \right| > \left| \tau -
     \frac{p_{\ell}}{q_{\ell}} \right| > \frac{C (\tau,
     \varepsilon)}{q_{\ell}^{2 + \varepsilon}} .
  \end{equation*}
  Assuming, in addition, $n \geq q_{\ell - 1}$, we thus obtain
  \begin{equation*}
    \frac{| \csc (\pi n \tau) |^{\lambda}}{n^{\lambda + 1}} \leq
     \frac{1}{n^{2 \lambda + 1} | \tau - k_n / n|^{\lambda}} <
     \frac{q_{\ell}^{(2 + \varepsilon) \lambda}}{C (\tau,
     \varepsilon)^{\lambda} q_{\ell - 1}^{2 \lambda + 1}} < \frac{C (\tau,
     \varepsilon)^{(1 + \varepsilon) \lambda}}{q_{\ell - 1}^{1 - \varepsilon
     (3 + \varepsilon) \lambda}} = \frac{C (\tau, \varepsilon')}{q_{\ell -
     1}^{1 - \varepsilon'}},
  \end{equation*}
  where $\varepsilon' = \varepsilon (3 + \varepsilon) \lambda > 0$.
  
  Combining our observations, we have
  \begin{equation*}
    \sum_{n \nin W_{\tau}} \frac{| \csc (\pi n \tau) |^{\lambda}}{n^{\lambda
     + 1}} \leq \sum_{\ell = 1}^{\infty} 16 \ell (\log q_{\ell})^{2
     \alpha}  \frac{C (\tau, \varepsilon')}{q_{\ell - 1}^{1 - \varepsilon'}},
  \end{equation*}
  and convergence follows from the fact that, by comparison with the Fibonacci
  numbers $F_{\ell}$ via \eqref{eq:pq:rec}, the sequence $q_{\ell}$ grows at
  least as fast as $\varphi^{\ell}$ with $\varphi = (1 + \sqrt{5}) / 2$.
\end{proof}

\begin{remark}
  \label{rk:conv:csc}With $\lambda$ and $\tau$ as in Lemma~\ref{lem:conv:csc},
  absolute convergence of the series
  \begin{equation*}
    \sum_{n = 1}^{\infty} \frac{\csc^{\lambda} (\pi n \tau)}{n^{\lambda + 1 +
     \delta}},
  \end{equation*}
  for $\delta > 0$, is much simpler to deduce. Indeed, estimating as in the
  proof of Lemma~\ref{lem:conv:csc}, we find
  \begin{equation*}
    \frac{| \csc (\pi n \tau) |^{\lambda}}{n^{\lambda + 1 + \delta}}
     \leq \frac{1}{n^{2 \lambda + 1 + \delta} | \tau - k_n /
     n|^{\lambda}} < \frac{C (\tau, \varepsilon)^{- \lambda}}{n^{1 + \delta -
     \lambda \varepsilon}},
  \end{equation*}
  where $C (\tau, \varepsilon)$ is the constant from applying the
  Thue--Siegel--Roth Theorem \eqref{eq:tsr}. Convergence follows upon choosing
  $\varepsilon$ such that $\lambda \varepsilon < \delta$.
\end{remark}

\begin{lemma}
  \label{lem:conv:sec}Let $\lambda > 0$ and $\tau$ be algebraic irrational.
  Then the series $\psi^{\lambda, 0}_{\lambda + 1} (\tau) = \sum_{n =
  1}^{\infty} \frac{\sec^{\lambda} (\pi n \tau)}{n^{\lambda + 1}}$ converges
  absolutely.
\end{lemma}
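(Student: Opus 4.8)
The plan is to reduce the secant series directly to the cosecant series treated in Lemma~\ref{lem:conv:csc}, rather than re-running Luca's argument. The one ingredient I need is the elementary identity $\sec x = 2 \sin x / \sin 2x$, a restatement of $\sin 2x = 2 \sin x \cos x$. Taking $x = \pi n \tau$ and using the trivial bound $|\sin(\pi n \tau)| \leq 1$, this yields the pointwise estimate
\[
  \frac{|\sec(\pi n \tau)|^{\lambda}}{n^{\lambda+1}}
  = \frac{2^{\lambda}\,|\sin(\pi n \tau)|^{\lambda}}{|\sin(2 \pi n \tau)|^{\lambda}\, n^{\lambda+1}}
  \leq 2^{\lambda}\,\frac{|\csc(\pi n \cdot 2\tau)|^{\lambda}}{n^{\lambda+1}},
\]
valid for every $n \geq 1$, since $\sin(2\pi n\tau) \neq 0$ because $2\tau$ is irrational. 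Summing over $n$ then gives $\psi^{\lambda,0}_{\lambda+1}(\tau) \leq 2^{\lambda}\,\psi^{0,\lambda}_{\lambda+1}(2\tau)$ as series of absolute values. Finally, I note that $2\tau$ is again an algebraic irrationality whenever $\tau$ is, so Lemma~\ref{lem:conv:csc} applies to $2\tau$ and shows the right-hand side is finite. This completes the argument.

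\textbf{On the main obstacle.} With this reduction in hand there is essentially no obstacle: all the arithmetic content has already been expended in Lemma~\ref{lem:conv:csc}, and the remaining step is a one-line trigonometric manipulation together with the (obvious) observation that scaling an algebraic irrational by $2$ produces an algebraic irrational. If instead one wanted a self-contained direct proof mirroring Lemma~\ref{lem:conv:csc}, the natural starting point would be the lower bound $|\cos(\pi n\tau)| \geq c\,|2n\tau - (2k_n+1)|$, where $2k_n+1$ is the nearest odd integer to $2n\tau$, and one would then feed the rational approximations $(2k_n+1)/(2n)$ to $\tau$ into Worley's Theorem~\ref{thm:worley} and the Thue--Siegel--Roth bound exactly as before. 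The point needing care in that alternative route is that these approximations have denominator of size up to $2n$ rather than $n$, so one must choose the convergent index $\ell$ with $q_\ell > 2n$ and adjust both the best-approximation estimate and the count of exceptional $n < q_\ell$ by the resulting factors of $2$; the exponential growth $q_\ell \gg \varphi^{\ell}$ still closes the sum. The reduction through $2\tau$ sidesteps all of this, which is why I would use it.
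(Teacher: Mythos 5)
Your proof is correct, and it takes a genuinely different route from the paper's. The paper proves this lemma by writing $\sec(\tau) = \csc(\tau + \tfrac{\pi}{2})$, bounding $|\sec(\pi n \tau)|^{\lambda} n^{-\lambda-1}$ by $n^{-2\lambda-1} |\tau - \tfrac{2k_n-1}{2n}|^{-\lambda}$, and then asserting that one can ``argue as in the proof of Lemma~\ref{lem:conv:csc}'' with these half-integer approximations, omitting the details --- which are exactly the adjustments you describe in your second paragraph (denominators of size $2n$, shifting the convergent index, extra factors of $2$ in the count of exceptional indices). Your reduction via $\sec x = 2\sin x/\sin 2x$, giving $|\sec(\pi n\tau)| \leq 2\,|\csc(\pi n \cdot 2\tau)|$ and hence domination by $2^{\lambda}\,\psi^{0,\lambda}_{\lambda+1}(2\tau)$, sidesteps all of that: since $2\tau$ is again algebraic irrational, Lemma~\ref{lem:conv:csc} applies verbatim as a black box and no part of Luca's argument needs to be revisited. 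What your approach buys is a complete, one-line deduction where the paper leaves a (routine but nontrivial) adaptation to the reader; what the paper's approach buys is a direct estimate in terms of approximations to $\tau$ itself, which is the form one would want if one later needed quantitative control on the secant series rather than mere convergence. Both are valid; yours is arguably the cleaner way to establish the stated lemma.
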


\begin{proof}
  The proof proceeds along similar lines as the proof of
  Lemma~\ref{lem:conv:csc}. Indeed, we have the elementary relation
  \begin{equation*}
    \sec ( \tau) = \csc \left( \tau + \frac{\pi}{2} \right),
  \end{equation*}
  so that
  \begin{equation*}
    \frac{| \sec (\pi n \tau) |^{\lambda}}{n^{\lambda + 1}} = \frac{| \csc
     \left( \pi (n \tau + \tfrac{1}{2}) \right) |^{\lambda}}{n^{\lambda + 1}}
     \leq \frac{1}{n^{\lambda + 1} |n \tau + \tfrac{1}{2} - k_n
     |^{\lambda}} = \frac{1}{n^{2 \lambda + 1} \left| \tau - \tfrac{2 k_n -
     1}{2 n} \right|^{\lambda}}
  \end{equation*}
  with $k_n = [n \tau + \tfrac{1}{2}]$. It remains to argue as in the proof of
  Lemma~\ref{lem:conv:csc} and we omit the details.
\end{proof}

\section{Background on fractional linear transformations}\label{sec:bg}

We denote with $T$, $S$ and $R$ the matrices
\begin{equation}
  T = \left(\begin{array}{cc}
    1 & 1\\
    0 & 1
  \end{array}\right), \hspace{1em} S = \left(\begin{array}{cc}
    0 & - 1\\
    1 & 0
  \end{array}\right), \hspace{1em} R = \left(\begin{array}{cc}
    1 & 0\\
    1 & 1
  \end{array}\right), \label{eq:TS}
\end{equation}
and recall that the matrices $T$ and $S$ generate $\Gamma_1 = \operatorname{SL}_2
(\mathbb{Z})$. The principal congruence subgroup $\Gamma (N)$ of $\Gamma_1$
consists of those matrices that are congruent to the identity matrix $I$
modulo $N$. More generally, congruence subgroups of $\Gamma_1$ are those
subgroups $\Gamma \leq \Gamma_1$ containing $\Gamma (N)$ for some $N$;
the minimal such $N$ being the level of $\Gamma$.

As usual, we consider the action of $\Gamma_1$ on complex numbers $\tau$ by
fractional linear transformations and write
\begin{equation*}
  \left(\begin{array}{cc}
     a & b\\
     c & d
   \end{array}\right) \cdot \tau = \frac{a \tau + b}{c \tau + d} .
\end{equation*}
Correspondingly, $\Gamma_1$ acts on the space of functions (on the upper
half-plane or on the full complex plane) via the slash operators $|_k$,
defined by
\begin{equation*}
  (f|_k \gamma) (\tau) = (c \tau + d)^{- k} f (\gamma \tau), \hspace{1em}
   \gamma = \left(\begin{array}{cc}
     a & b\\
     c & d
   \end{array}\right) \in \Gamma_1 .
\end{equation*}
This action extends naturally to the group algebra $\mathbb{C} [\Gamma_1]$.

Given a quadratic irrationality $\tau$, let $A x^2 + B x + C$, with $A > 0$
and $(A, B, C) = 1$, be its minimal polynomial and $\Delta = B^2 - 4 A C$ its
discriminant. We follow the exposition of \cite[p.~72]{zagier-intro123} and
observe that the fractional linear transformation
\begin{equation*}
  \gamma = \left(\begin{array}{cc}
     a & b\\
     c & d
   \end{array}\right) \in \operatorname{SL}_2 (\mathbb{Z}) .
\end{equation*}
fixes $\tau$ if and only if $(c, d - a, - b) = u (A, B, C)$ for some integral
factor of proportionality $u$. In that case, writing $t = a + d$ for the trace
of $\gamma$, we have
\begin{equation}
  \gamma = \left(\begin{array}{cc}
    \frac{t - B u}{2} & - C u\\
    A u & \frac{t + B u}{2}
  \end{array}\right), \hspace{4em} \det (\gamma) = \frac{t^2 - \Delta u^2}{4}
  . \label{eq:fix}
\end{equation}
Let us now restrict to real quadratic irrationalities $\tau$. In that case,
$\Delta > 0$. The above argument demonstrates that, if $t, u$ are solutions to
the Pell equation
\begin{equation*}
  t^2 - \Delta u^2 = 4,
\end{equation*}
then the fractional linear transformation $\gamma$ given in \eqref{eq:fix}
fixes $\tau$ (and all fractional linear transformations fixing $\tau$ arise
that way).

\begin{lemma}
  \label{lem:fix}Let $\tau$ be a quadratic irrationality, and $\Gamma
  \leq \Gamma_1$ a congruence subgroup. Then there exists $\gamma \in
  \Gamma$, with $\gamma \neq \pm I$, such that $\gamma \cdot \tau = \tau$.
\end{lemma}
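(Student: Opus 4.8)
The plan is to produce an explicit $\gamma$ fixing $\tau$ and then show that a suitable power of it lies in $\Gamma$. First I would dispose of the easy case where $\tau = i$ or $\tau = \zeta_3$ (up to $\Gamma_1$-equivalence), i.e.\ the imaginary quadratic irrationalities fixed by elements of finite order; here one can simply note that $\Gamma$ has finite index in $\Gamma_1$, so the cyclic group generated by the order-$4$ (resp.\ order-$6$) stabilizer has a nontrivial power landing in $\Gamma$. The substantive case is when $\tau$ is a real quadratic irrationality (or more generally when the stabilizer in $\mathrm{PSL}_2(\mathbb{Z})$ is infinite cyclic).

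For the real quadratic case I would invoke the description recalled just above the lemma: if $Ax^2+Bx+C$ is the (primitive, $A>0$) minimal polynomial of $\tau$ with discriminant $\Delta = B^2-4AC>0$, then every $\gamma\in\Gamma_1$ fixing $\tau$ has the shape \eqref{eq:fix}, parametrized by solutions $(t,u)$ of the Pell equation $t^2-\Delta u^2=4$. Since $\Delta>0$ is not a perfect square, Pell's equation has infinitely many solutions; pick the fundamental one $(t_0,u_0)$ with $u_0\neq 0$ and let $\gamma_0$ be the corresponding matrix in \eqref{eq:fix}. Then $\gamma_0\neq\pm I$ (because $u_0\neq 0$ forces the off-diagonal entry $Au_0\neq 0$), and $\gamma_0$ fixes $\tau$.

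Now the key step: I would show that $\gamma_0^{\,k}\in\Gamma$ for some $k\geq 1$, and that $\gamma_0^{\,k}\neq\pm I$. For the membership, let $N$ be such that $\Gamma\supseteq\Gamma(N)$; since $\mathrm{SL}_2(\mathbb{Z}/N\mathbb{Z})$ is finite, the image of $\gamma_0$ in $\mathrm{SL}_2(\mathbb{Z}/N\mathbb{Z})$ has finite order $k$, whence $\gamma_0^{\,k}\equiv I\pmod N$, i.e.\ $\gamma_0^{\,k}\in\Gamma(N)\subseteq\Gamma$. It remains to check $\gamma_0^{\,k}\neq\pm I$: since $\gamma_0$ still fixes $\tau$, so does $\gamma_0^{\,k}$, so by \eqref{eq:fix} it corresponds to some Pell solution $(t_k,u_k)$; and $\gamma_0^{\,k}=\pm I$ would force $u_k=0$ and $t_k=\pm 2$, i.e.\ $\gamma_0$ would have finite order in $\mathrm{PSL}_2(\mathbb{Z})$. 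But a real quadratic $\tau$ is not a fixed point of any elliptic element (elliptic fixed points lie in the upper half-plane, or equivalently $t_0^2-\Delta u_0^2=4$ with $u_0\neq 0$ and $\Delta>0$ gives $|t_0|>2$, so $\gamma_0$ is hyperbolic and of infinite order). Hence $\gamma_0^{\,k}\neq\pm I$, and we may take $\gamma=\gamma_0^{\,k}$.

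The main obstacle I anticipate is bookkeeping around the $\pm I$ ambiguity and uniformly handling both the imaginary quadratic ($\tau\sim i,\zeta_3$) and real quadratic cases; the cleanest route is to phrase everything in $\mathrm{PSL}_2(\mathbb{Z})$, where the stabilizer of any quadratic irrationality is either a nontrivial finite cyclic group (imaginary case) or infinite cyclic (real case), observe that $\Gamma/(\Gamma\cap\{\pm I\})$ has finite index in $\mathrm{PSL}_2(\mathbb{Z})$, and in both cases extract a nontrivial stabilizing element of $\Gamma$ from a finite-index subgroup of a nontrivial cyclic group. One should double-check that the element produced in the finite-cyclic case is genuinely $\neq\pm I$ in $\mathrm{SL}_2$, which follows since the full stabilizer in $\mathrm{PSL}_2(\mathbb{Z})$ is nontrivial and $\Gamma$ meets it in a finite-index, hence nontrivial, subgroup.
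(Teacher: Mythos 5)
Your handling of the real quadratic case is correct and takes a mildly different route from the paper's. The paper engineers a Pell solution directly: it solves $X^2 - \Delta N^2 Y^2 = 1$ and substitutes $t = 2X$, $u = 2NY$ into \eqref{eq:fix}, so that the resulting matrix lies in $\Gamma(N) \leq \Gamma$ by construction. You instead take an arbitrary nontrivial solution of $t^2 - \Delta u^2 = 4$, obtain a hyperbolic $\gamma_0$ fixing $\tau$, and pass to the power $\gamma_0^k$ that becomes trivial in the finite group $\operatorname{SL}_2(\mathbb{Z}/N\mathbb{Z})$; the observation that $t_0^2 = 4 + \Delta u_0^2 > 4$ makes $\gamma_0$ of infinite order and hence guarantees $\gamma_0^k \neq \pm I$. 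Both arguments are valid; yours trades the explicit congruence bookkeeping for the finiteness of $\operatorname{SL}_2(\mathbb{Z}/N\mathbb{Z})$, and arguably sidesteps a small point the paper glosses over (checking that the diagonal entries are $\equiv 1$, not $-1$, modulo $N$).

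Your ``easy case'' for imaginary quadratic $\tau$, however, is wrong, and the statement is in fact false there. The step you rely on --- that a finite-index subgroup of a nontrivial \emph{finite} cyclic group is nontrivial --- fails: the trivial subgroup has finite index in any finite group. Concretely, the stabilizer of $i$ in $\operatorname{SL}_2(\mathbb{Z})$ is $\{\pm I, \pm S\}$, and $\Gamma(3)$, which is torsion-free and does not contain $-I$, meets it only in $\{I\}$; likewise Remark~\ref{rk:H:troubles} points out that $\zeta_3$ is fixed by no nontrivial element of $\Gamma(2)$. The lemma is intended (and used, and proved) only for \emph{real} quadratic irrationalities --- the paragraph immediately preceding it restricts to $\Delta > 0$, which is exactly what makes the Pell equation nontrivially solvable and the stabilizer infinite. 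You should delete the elliptic case rather than attempt to prove it; with that excision, your argument for the case that matters is sound.
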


\begin{proof}
  Let $N$ be such that $\Gamma (N) \leq \Gamma$. For every positive
  nonsquare $k$, Pell's equation
  \begin{equation}
    X^2 - k Y^2 = 1 \label{eq:pell}
  \end{equation}
  has nontrivial solutions $X$, $Y$. A proof of this fact was first published
  by Lagrange in 1768 \cite{lagrange-pell}, and we refer to
  \cite{lenstra-pell} for further information and background. As before, let
  $\Delta$ be the discriminant of $\tau$. Then $k = \Delta N^2$ is positive
  and not a perfect square, so that we find integers $X, Y$, with $Y \neq 0$,
  solving \eqref{eq:pell}. In light of the above discussion, setting $t = 2 X$
  and $u = 2 N Y$ in \eqref{eq:fix} gives a fractional linear transformation
  $\gamma$ which fixes $\tau$. Clearly, $\gamma \in \Gamma (N)$ and $\gamma
  \neq \pm I$.
\end{proof}

\section{Eichler integrals}\label{sec:eichler}

If $f (\tau)$ is a modular form of weight $k$ with respect to $\Gamma
\leq \Gamma_1$, then any $(k - 1)$st antiderivative of $f (\tau)$ is
called an {\emph{Eichler integral}}. Such an Eichler integral $F (\tau)$ is
characterized by the property that, for any $\gamma \in \Gamma$, $F|_{2 - k}
[\gamma - 1]$ is a polynomial of degree at most $k - 2$. These are referred to
as the {\emph{period polynomials}} of $f$ and their coefficients encode the
critical $L$-values of $f$. For the general theory of period polynomials we
refer to \cite{popa-period} and the references therein.

As mentioned in the introduction, Ramanujan's formula \eqref{eq:rama}
expresses the fact that, for odd $s$, the cotangent Dirichlet series $\xi_s
(\tau)$, defined in \eqref{eq:cot}, is, essentially, an Eichler integral.
Indeed, \eqref{eq:rama} may be expressed as
\begin{equation*}
  \xi_{2 m - 1} |_{2 - 2 m} [S - 1] = (- 1)^m (2 \pi)^{2 m - 1} \sum_{n =
   0}^m \frac{B_{2 n}}{(2 n) !}  \frac{B_{2 m - 2 n}}{(2 m - 2 n) !} \tau^{2 n
   - 1} .
\end{equation*}
The reason that $\xi_{2 m - 1} |_{2 - 2 m} [S - 1]$ are rational functions,
instead of polynomials, is that the $s$th derivative of $\xi_s (\tau)$ is an
Eisenstein series with the constant term of its Fourier expansion missing. We
refer to \cite{gun-murty-rath2011} for more details on the cotangent
Dirichlet series.

Similarly, it was shown in \cite{lrr-secantzeta} and \cite{bs-secantsums}
that, for even $s$, the secant Dirichlet series
\begin{equation*}
  \psi^{1, 0}_s (\tau) = \sum_{n = 1}^{\infty} \frac{\sec (\pi n \tau)}{n^s}
\end{equation*}
is, essentially, an Eichler integral of weight $1 - s$ with respect to the
modular group $\Gamma_2 = \langle T^2, R^2 \rangle$ generated by the matrices
\begin{equation}
  T^2 = \left(\begin{array}{cc}
    1 & 2\\
    0 & 1
  \end{array}\right), \hspace{1em} R^2 = \left(\begin{array}{cc}
    1 & 0\\
    2 & 1
  \end{array}\right) . \label{eq:AB}
\end{equation}
In other words, $\Gamma_2 \leq \Gamma ( 2)$ is the subgroup of the
principal modular subgroup $\Gamma ( 2)$ consisting of those matrices whose
diagonal entries are congruent to $1$ modulo $4$. More precisely, for any
$\gamma \in \Gamma_2$,
\begin{equation}
  \psi^{1, 0}_s |_{1 - s} [ \gamma - 1] = \pi^s p_s ( \gamma ; \tau),
  \label{eq:sec:pp}
\end{equation}
where $p_s ( \gamma ; \tau)$ is a rational function in $\tau$ with rational
coefficients. To be explicit, we have
\begin{eqnarray*}
  p_s ( T^2 ; \tau) & = & 0,\\
  p_s ( R^2 ; \tau) & = & [ z^{s - 1}]  \frac{\sin (\tau z)}{\sin (z) \sin ((2
  \tau + 1) z)},
\end{eqnarray*}
from which $p_s ( \gamma ; \tau)$ can be derived recursively in light of the
cocycle relation
\begin{equation*}
  p_s ( \alpha \beta ; \tau) = p_s ( \alpha ; \tau) |_{1 - s} \beta + p_s (
   \beta ; \tau) .
\end{equation*}
Several alternative expressions for $p_s ( R^2 ; \tau)$, for instance as
convolution sums involving Bernoulli numbers, are given in
\cite{lrr-secantzeta} and \cite{bs-secantsums}.

\section{Evaluating trigonometric Dirichlet series}\label{sec:eval}

The goal of this section is to prove Theorem~\ref{thm:eval}. Let us begin by
first considering the case $a \leq 0$ and $b \leq 0$, which is much
simpler and of a rather different nature than the other cases. Indeed, in that
case $\psi^{a, b}_s (\tau)$, with $s$ of the same parity as $b$, is piecewise
polynomial in $\tau$.

\begin{lemma}
  \label{lem:eval:cs}Let $\tau$ be real, and let $a, b, s$ be integers such
  that $a \leq 0$, $b \leq 0$ and $s > 1$. If $s$ and $b$ have the
  same parity, then
  \begin{equation*}
    \psi^{a, b}_s (\tau) = \sum_{n = 1}^{\infty} \frac{\operatorname{trig}^{a, b}
     (\pi n \tau)}{n^s} = \pi^s f (\tau),
  \end{equation*}
  where $f (\tau)$ is piecewise polynomial in $\tau$ with rational
  coefficients on each piece.
\end{lemma}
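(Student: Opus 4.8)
The plan is to reduce everything to the classical Fourier series expansions of the Bernoulli periodic functions. Since $a\le 0$ and $b\le 0$, the function $\operatorname{trig}^{a,b}(x)=\cos^{-a}(x)\sin^{-b}(x)$ is an entire, $2\pi$-periodic (or $\pi$-periodic, up to sign) trigonometric polynomial; concretely, writing $A=-a\ge 0$ and $B=-b\ge 0$, expanding $\cos^A x$ and $\sin^B x$ via the binomial theorem in terms of $\e^{\pm ix}$ shows that
\begin{equation*}
  \operatorname{trig}^{a,b}(x) = \sum_{|k|\le A+B} c_k\, \e^{ikx}, \qquad c_k\in\mathbb{Q}[i],
\end{equation*}
where the sum runs over integers $k$ with $k\equiv B\pmod 2$ (the parity of the frequency is forced by the $\sin^B$ factor), and where $c_{-k}=\overline{c_k}$, so that $c_k+c_{-k}\in\mathbb{Q}$ and $i(c_k-c_{-k})\in\mathbb{Q}$. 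First I would record this expansion and note that the constant term $c_0$ is nonzero only when $B$ is even, i.e.\ only when $b$ is even.

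Next, substituting $x=\pi n\tau$ and summing against $n^{-s}$, I would interchange summation (justified by absolute convergence from part~1 of the convergence theorem, since $s>1$) to obtain
\begin{equation*}
  \psi^{a,b}_s(\tau) = \sum_{|k|\le A+B} c_k \sum_{n=1}^\infty \frac{\e^{i\pi k n \tau}}{n^s}
    = c_0\,\zeta(s) + \sum_{0<|k|\le A+B} c_k\, \operatorname{Li}_s\!\left(\e^{i\pi k\tau}\right).
\end{equation*}
For $k\neq 0$ I would pair the $k$ and $-k$ terms and invoke the classical evaluation of $\operatorname{Li}_s(\e^{2\pi i x})+(-1)^s\operatorname{Li}_s(\e^{-2\pi i x})$ in terms of the Bernoulli polynomial $B_s$: for every integer $s\ge 1$ and real $x$,
\begin{equation*}
  \sum_{n=1}^\infty \frac{\e^{2\pi i n x}}{n^s} + (-1)^s\sum_{n=1}^\infty \frac{\e^{-2\pi i n x}}{n^s}
    = -\frac{(2\pi i)^s}{s!}\, \overline{B}_s(x),
\end{equation*}
where $\overline{B}_s$ is the $1$-periodic extension of the Bernoulli polynomial, a piecewise polynomial of degree $s$ with rational coefficients. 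Here $k\tau/2$ plays the role of $x$. The parity hypothesis enters exactly at this point: the term $c_k\operatorname{Li}_s(\e^{i\pi k\tau})+c_{-k}\operatorname{Li}_s(\e^{-i\pi k\tau})$ combines into a $\pm$-symmetric combination that this formula evaluates only because the ``wrong-parity'' leftover is killed — when $s$ is even, $\operatorname{Li}_s(q)-\operatorname{Li}_s(q^{-1})$ is the awkward piece, and it is multiplied by $i(c_k-c_{-k})\in\mathbb{Q}$ times a factor that, combined with $\pi^s$ and the requirement that $s\equiv b\equiv B\pmod 2$ and $k\equiv B\pmod 2$, works out to a rational multiple of $\pi^s$; similarly when $s$ is odd. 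Concretely, because $k$ has the same parity as $B$, which has the same parity as $s$, the product $k\cdot$(parity of $s$) lines up so that only the ``$(-1)^s$'' combination above is needed. I would spell this parity bookkeeping out carefully, since it is the one genuinely delicate point. Finally, collecting terms: $c_0\zeta(s)$ contributes a rational multiple of $\pi^s$ only when $s$ is even, but in the case at hand $c_0\neq 0$ forces $B$ even hence $b$ even hence (by hypothesis) $s$ even, so $\zeta(s)\in\pi^s\mathbb{Q}$; and each paired polylog term contributes $\pi^s$ times a piecewise-polynomial function of $k\tau$ with rational coefficients. Summing a finite number of such pieces over $|k|\le A+B$ yields $\psi^{a,b}_s(\tau)=\pi^s f(\tau)$ with $f$ piecewise polynomial in $\tau$ with rational coefficients on each piece (the pieces being cut out by the arithmetic-progression conditions $k\tau\in\mathbb{Z}+(\text{interval})$).

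The main obstacle is the parity bookkeeping in the middle step: one must check that, under the hypothesis $s\equiv b\pmod 2$, every frequency $k$ that actually appears (namely $k\equiv b\pmod 2$) pairs with its negative in precisely the combination $\operatorname{Li}_s(q)+(-1)^s\operatorname{Li}_s(q^{-1})$ that the Bernoulli formula handles, and that the resulting constants land in $\mathbb{Q}$ rather than $i\mathbb{Q}$. Everything else — the binomial expansion, the interchange of summation, the classical Fourier series of $\overline{B}_s$ — is routine. An alternative, perhaps cleaner, route avoiding the explicit $\e^{ikx}$ expansion is to use the reduction identity \eqref{eq:trig:red2} together with the difference relations $\operatorname{trig}^{a-2,b}=\operatorname{trig}^{a,b}-\operatorname{trig}^{a,b-2}$ to induct down to the base cases $(a,b)=(0,0)$, $(0,-1)$ (constant and $\sin$), but the direct Fourier approach is the most transparent.
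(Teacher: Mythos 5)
Your proposal is correct and follows essentially the same route as the paper: since $a,b\le 0$, $\operatorname{trig}^{a,b}$ is a finite trigonometric polynomial, which the paper likewise expands (as a Fourier cosine or sine series according to the parity of $b$) before invoking the classical Bernoulli-polynomial evaluations \eqref{eq:ds:cos}--\eqref{eq:ds:sin}; your exponential/polylogarithm phrasing is equivalent. One correction to the step you flag as delicate: the frequencies $k$ occurring in $\cos^{-a}(x)\sin^{-b}(x)$ satisfy $k\equiv a+b\pmod{2}$, not $k\equiv b\pmod{2}$ (e.g.\ $\cos x$ has frequencies $\pm 1$), and the parity of $k$ is in fact irrelevant to the bookkeeping. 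What guarantees that only the good combination $\operatorname{Li}_s(q)+(-1)^s\operatorname{Li}_s(q^{-1})$ arises is the symmetry $\operatorname{trig}^{a,b}(-x)=(-1)^b\operatorname{trig}^{a,b}(x)$, which together with $c_{-k}=\overline{c_k}$ forces each $c_k$ to be real when $b$ is even and purely imaginary when $b$ is odd --- precisely the even/odd-function observation on which the paper's cosine-versus-sine split rests.
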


\begin{proof}
  Recall the classical formulas, valid for $0 < \tau < 1$,
  \begin{eqnarray}
    \sum_{n = 1}^{\infty} \frac{\cos (2 \pi n \tau)}{n^{2 m}} & = & \frac{(-
    1)^{m + 1}}{2} \frac{(2 \pi)^{2 m}}{(2 m) !} B_{2 m} (\tau), 
    \label{eq:ds:cos}\\
    \sum_{n = 1}^{\infty} \frac{\sin (2 \pi n \tau)}{n^{2 m + 1}} & = &
    \frac{(- 1)^{m + 1}}{2} \frac{(2 \pi)^{2 m + 1}}{(2 m + 1) !} B_{2 m + 1}
    (\tau),  \label{eq:ds:sin}
  \end{eqnarray}
  which may be found, for instance, in \cite[Theorem 3.2]{berndt-modular77}
  or \cite[Section 7.5.3]{lewin2}. Here, $B_n (x)$ denotes the $n$th
  Bernoulli polynomial. Note that $\operatorname{trig}_{a, b} (\tau)$ is an even or
  odd function depending on whether $b$ is even or odd. Writing
  $\operatorname{trig}_{a, b} (\tau)$ as a Fourier cosine or Fourier sine series,
  depending on the parity of $b$, we may express $\psi^{a, b}_s (\tau)$ as a
  finite linear combination of series of the type \eqref{eq:ds:cos} or
  \eqref{eq:ds:sin}. This shows that $\psi^{a, b}_s (\tau)$ is indeed $\pi^s$
  times a piecewise polynomial in $\tau$ with rational coefficients.
\end{proof}

The next result is a crucial building block for our proof of
Theorem~\ref{thm:eval}. Note that $\psi^{1, 0}_s$ is the secant Dirichlet
series.

\begin{proposition}
  \label{prop:eval:secD}Let $\psi_s = \psi^{1, 0}_s$. Let $\rho$ a real
  quadratic irrationality, and $j, s$ nonnegative integers with $s \geq 2
  j + 2$. If $j$ and $s$ have the same parity, then
  \begin{equation*}
    ( D^j \psi_s) ( \rho) \in \pi^s \mathbb{Q} ( \rho) .
  \end{equation*}
  If, in addition, $\rho^2 \in \mathbb{Q}$, then $( D^j \psi_s) ( \rho) \in (
  \pi \rho)^s \mathbb{Q}$.
\end{proposition}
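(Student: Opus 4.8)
The plan is to leverage the fact, recalled in Section~\ref{sec:eichler}, that $\psi_s = \psi_s^{1,0}$ is essentially an Eichler integral of weight $1-s$ for $\Gamma_2$, so that for every $\gamma \in \Gamma_2$ the ``period function'' $\psi_s|_{1-s}[\gamma-1] = \pi^s p_s(\gamma;\tau)$ is a rational function of $\tau$ with rational coefficients, given explicitly by \eqref{eq:sec:pp} and the cocycle relation. First I would fix, via Lemma~\ref{lem:fix} applied with the congruence subgroup $\Gamma_2$, a matrix $\gamma = \left(\begin{smallmatrix} a & b \\ c & d \end{smallmatrix}\right) \in \Gamma_2$, $\gamma \neq \pm I$, with $\gamma \cdot \rho = \rho$. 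Since $\rho$ is a quadratic irrationality, $c \neq 0$ (otherwise $\gamma$ would be upper triangular and could fix only rationals or $\infty$), and $c\rho + d \neq 0$. Evaluating the functional equation $\psi_s(\gamma\tau) = (c\tau+d)^{s-1}\psi_s(\tau) + \pi^s(c\tau+d)^{s-1}p_s(\gamma;\tau)$ at $\tau = \rho$ gives, after using $\gamma\rho = \rho$ and dividing by $1 - (c\rho+d)^{s-1}$ (which is nonzero: $|c\rho+d| \neq 1$ since $\gamma \neq \pm I$ and $\rho$ is irrational, cf.\ the quantity \eqref{eq:div0} referred to in Remark~\ref{rk:H:troubles}), the evaluation $\psi_s(\rho) \in \pi^s\,\mathbb{Q}(\rho)$.

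To handle the derivatives $D^j\psi_s$, I would differentiate the functional equation $j$ times. Writing $g(\tau) = (c\tau+d)^{s-1}$, Leibniz's rule yields $(D^j(\psi_s \circ \gamma))(\tau) = \sum_{i=0}^{j} \binom{j}{i}(D^i g)(\tau)\,(D^{j-i}(\psi_s|_{1-s}[\gamma])\circ\text{(chain rule)})$... more cleanly: the chain rule gives $D(\psi_s(\gamma\tau)) = (c\tau+d)^{-2}(D\psi_s)(\gamma\tau)$ since $\frac{d}{d\tau}(\gamma\tau) = (c\tau+d)^{-2}$, so iterating, $(D^j(\psi_s\circ\gamma))(\tau)$ is a $\mathbb{Z}$-linear combination, with coefficients polynomial in $c$, $d$, $(c\tau+d)^{-1}$, of the functions $(D^k\psi_s)(\gamma\tau)$ for $0 \le k \le j$. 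Applying $D^j$ to both sides of $\psi_s(\gamma\tau) = g(\tau)\psi_s(\tau) + \pi^s g(\tau)p_s(\gamma;\tau)$ and then setting $\tau = \rho$, I obtain a relation expressing $(D^j\psi_s)(\rho)$ in terms of $(D^k\psi_s)(\rho)$ for $k < j$, of $(D^k\psi_s)(\rho)$ for $k \le j$ with an overall factor $(c\rho+d)^{-2j+(s-1)}$ coming from the left side versus $(c\rho+d)^{s-1}$-type factors on the right, plus a term $\pi^s \cdot (\text{rational function of }\rho\text{ over }\mathbb{Q})$ coming from $D^j(g\cdot p_s(\gamma;\cdot))$ evaluated at $\rho$. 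The coefficient of $(D^j\psi_s)(\rho)$ on the two sides differs by $(c\rho+d)^{-2j}\cdot(c\rho+d)^{s-1} - (c\rho+d)^{s-1}$ up to units, i.e.\ is $(c\rho+d)^{s-1}((c\rho+d)^{-2j}-1)$, which is nonzero. Hence by strong induction on $j$ — the base case $j=0$ being the paragraph above — each $(D^j\psi_s)(\rho)$ lies in $\pi^s\mathbb{Q}(\rho)$, provided the parity and range conditions $s \ge 2j+2$, $j \equiv s \pmod 2$ guarantee the relevant rational functions $p_s(\gamma;\tau)$ and their derivatives are well-defined and have rational coefficients (this is where the parity hypothesis enters, mirroring its role in \eqref{eq:sec:pp} and the theory of period polynomials).

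For the refinement when $\rho^2 \in \mathbb{Q}$, note that then $\mathbb{Q}(\rho) = \mathbb{Q} \oplus \mathbb{Q}\rho$ and the nontrivial automorphism of $\mathbb{Q}(\rho)$ sends $\rho \mapsto -\rho$, which corresponds to the substitution $\tau \mapsto -\tau$ on the level of the series. I would exploit the parity of the secant Dirichlet series: $\sec$ is even, so $\psi_s(-\tau) = \psi_s(\tau)$, whence $(D^j\psi_s)(-\tau) = (-1)^j (D^j\psi_s)(\tau)$, so $(D^j\psi_s)(-\rho) = (-1)^j(D^j\psi_s)(\rho)$. Writing $(D^j\psi_s)(\rho) = \pi^s(u + v\rho)$ with $u, v \in \mathbb{Q}$, applying the automorphism gives $(D^j\psi_s)(-\rho) = \pi^s(u - v\rho)$, and comparing with $(-1)^j(D^j\psi_s)(\rho) = (-1)^j\pi^s(u+v\rho)$ forces $u = 0$ when $j$ is even and $v = 0$ when $j$ is odd. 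Combined with the parity hypothesis $j \equiv s \pmod 2$, one checks that in both cases $(D^j\psi_s)(\rho)$ is a rational multiple of $\pi^s\rho^{\,\varepsilon}$ with $\varepsilon \equiv j \equiv s \pmod 2$, and since $\rho^2 \in \mathbb{Q}$ this is the same as $(\pi\rho)^s\mathbb{Q}$. The main obstacle I anticipate is bookkeeping the derivative identities cleanly — tracking exactly which power of $(c\rho+d)$ multiplies each $(D^k\psi_s)(\rho)$ after $j$-fold differentiation and confirming that the ``diagonal'' coefficient never vanishes — together with verifying that the parity constraint $j \equiv s \pmod 2$ is precisely what keeps all the intervening rational functions $D^i p_s(\gamma;\cdot)$ defined at $\tau = \rho$ with coefficients in $\mathbb{Q}$; everything else is linear algebra over $\mathbb{Q}(\rho)$.
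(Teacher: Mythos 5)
Your proposal follows essentially the same route as the paper: use Lemma~\ref{lem:fix} to produce $\gamma \in \Gamma_2$ with $\gamma \cdot \rho = \rho$, differentiate the transformation law \eqref{eq:sec:pp} $j$ times, and solve for $(D^j\psi_s)(\rho)$ by strong induction after dividing by the nonvanishing ``diagonal'' coefficient, with the refinement for $\rho^2 \in \mathbb{Q}$ obtained by playing the evenness of $\sec$ against the conjugate $-\rho$. Three details in your write-up need repair, though none changes the architecture. First, with the paper's slash convention the transformation law is $(c\tau+d)^{s-1}\psi_s(\gamma\tau) = \psi_s(\tau) + \pi^s p_s(\gamma;\tau)$ (exponent $s-1$ on the \emph{left}), so after $j$ differentiations and setting $\tau=\rho$ the diagonal coefficient is $(c\rho+d)^{s-2j-1}-1$, the quantity \eqref{eq:div0} --- not $(c\rho+d)^{s-1}\bigl((c\rho+d)^{-2j}-1\bigr)$. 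This matters: the correct coefficient vanishes exactly when $s = 2j+1$, so the hypothesis $s \geq 2j+2$ is precisely what guarantees nonvanishing, whereas your version would make that hypothesis appear superfluous. Second, the step ``applying the automorphism gives $(D^j\psi_s)(-\rho) = \pi^s(u - v\rho)$'' is not automatic, since $\psi_s$ is a transcendental function and Galois equivariance of its evaluation has to be earned. The paper earns it by noting that $-\rho$, being the algebraic conjugate, is fixed by the \emph{same} $\gamma$, so the identical $\mathbb{Q}$-rational formula \eqref{eq:DjPsiRho} computes $(D^j\psi_s)(-\rho)$; you should add this observation and carry the conjugate through the induction. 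Third, your final parity bookkeeping is swapped: $j$ even forces $v = 0$ and $j$ odd forces $u = 0$, which is what actually matches your (correct) conclusion that the value is a rational multiple of $\pi^s\rho^{\varepsilon}$ with $\varepsilon \equiv j \equiv s \pmod 2$.
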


\begin{proof}
  Observe that the statement for $j = 0$ has been proven in
  \cite{bs-secantsums} and follows from \eqref{eq:sec:pp}. We will prove the
  general statement by induction on $j$. In preparation, we first make the
  effect of differentiation on period functions explicit.
  
  Given a function $F$, integer $k$ and $\gamma = \left(\begin{array}{cc}
    a & b\\
    c & d
  \end{array}\right) \in \Gamma ( 1)$, denote with $p_{F, k} ( \gamma ; \tau)$
  the function
  \begin{equation*}
    p_{F, k} ( \gamma ; \tau) = F|_k [ \gamma - 1] ( \tau) = (c \tau + d)^{-
     k} F \left( \frac{a \tau + b}{c \tau + d} \right) - F (\tau) .
  \end{equation*}
  If $F$ is an Eichler integral of weight $k$, which transforms with respect
  to $\gamma$, then $p_{F, k} ( \gamma ; \tau)$ is one of its period
  polynomials. By differentiating both sides with respect to $\tau$, we find
  \begin{eqnarray*}
    D p_{F, k} ( \gamma ; \tau) & = & - c k (c \tau + d)^{- k - 1} F ( \gamma
    \tau) + (c \tau + d)^{- k - 2} ( D F) ( \gamma \tau) - ( D F) (\tau)\\
    & = & ( D F) |_{k + 2} [ \gamma - 1] ( \tau) - \frac{c k}{c \tau + d}
    F|_k \gamma\\
    & = & p_{D F, k + 2} ( \gamma ; \tau) - \frac{c k}{c \tau + d} ( F (
    \tau) + p_{F, k} ( \gamma ; \tau)),
  \end{eqnarray*}
  and hence
  \begin{equation}
    p_{D F, k + 2} ( \gamma ; \tau) = \frac{c k}{c \tau + d} (F ( \tau) +
    p_{F, k} ( \gamma ; \tau)) + D p_{F, k} ( \gamma ; \tau) . \label{eq:ppD}
  \end{equation}
  We now apply this observation in the case $F = \psi_s$ and $k = 1 - s$.
  Assuming that $s$ is even and $\gamma \in \Gamma_2$, equation
  \eqref{eq:sec:pp} shows that $p_{\psi_s, 1 - s} ( \gamma ; \tau) \in \pi^s
  \mathbb{Q} ( \tau)$. Inductively applying $( \ref{eq:ppD})$, we then find
  that
  \begin{equation*}
    (c \tau + d)^{s - 2 j - 1} ( D^j \psi_s) ( \gamma \tau) - ( D^j \psi_s)
     (\tau) = \pi^s f ( \tau) + \sum_{m = 0}^{j - 1} f_m ( \tau) ( D^m \psi_s)
     ( \tau),
  \end{equation*}
  where $f ( \tau), f_0 ( \tau), \ldots, f_{j - 1} ( \tau)$ are rational
  functions in $\tau$ with rational coefficients (these functions, of course,
  depend on $j$ and $s$). By Lemma~\ref{lem:fix}, there exists $\gamma \in
  \Gamma_2 \geq \Gamma (4)$ such that $\rho$ is fixed by $\gamma$.
  Consequently, we obtain
  \begin{equation}
    ( D^j \psi_s) (\rho) = \pi^s g ( \rho) + \sum_{m = 0}^{j - 1} g_m ( \rho)
    ( D^m \psi_s) ( \rho), \label{eq:DjPsiRho}
  \end{equation}
  where the rational functions $g ( \tau), g_0 ( \tau), \ldots, g_{j - 1} (
  \tau)$ are obtained from $f ( \tau), f_0 ( \tau), \ldots, f_{j - 1} ( \tau)$
  by dividing by
  \begin{equation}
    (c \tau + d)^{s - 2 j - 1} - 1. \label{eq:div0}
  \end{equation}
  It is important to note that the condition $s \geq 2 j + 2$ guarantees
  \eqref{eq:div0} to be nonzero when $\tau$ is a real quadratic irrationality.
  The first claim, that is $( D^j \psi_s) ( \rho) \in \pi^s \mathbb{Q} (
  \rho)$, now follows by induction on $j$.
  
  For the second claim, suppose that $\rho = \sqrt{r}$, where $r \in
  \mathbb{Q}$. Being algebraic conjugates, $- \rho$ is fixed by $\gamma$ as
  well so that \eqref{eq:DjPsiRho} also holds with $\rho$ replaced by $-
  \rho$. Also, observe that $( D^m \psi_s) ( \tau) / \tau^s$ is an even
  function of $\tau$. By induction, we may assume that, for $m = 0, 1, \ldots,
  j - 1$,
  \begin{equation*}
    \frac{( D^m \psi_s) ( \rho)}{\rho^s} = \frac{( D^m \psi_s) ( - \rho)}{( -
     \rho)^s} \in \pi^s \mathbb{Q}.
  \end{equation*}
  In combination with equation \eqref{eq:DjPsiRho}, we thus find that
  \begin{equation*}
    \frac{( D^j \psi_s) (\rho)}{\rho^s} = \pi^s h ( \rho), \hspace{1em}
     \frac{( D^j \psi_s) (- \rho)}{( - \rho)^s} = \pi^s h ( - \rho),
  \end{equation*}
  where $h ( \tau) \in \mathbb{Q} ( \tau)$ is a (single) rational function
  with rational coefficients. Since the left-hand sides are equal, we conclude
  that $h ( \rho) = h ( - \rho)$. The rationality of $h ( \tau)$ then implies
  that $h ( \rho) \in \mathbb{Q}$. This proves the claim.
\end{proof}

We next observe that the previous result for the secant Dirichlet series
$\psi^{1, 0}_s$ carries over to the cases of the cosecant Dirichlet series
$\psi^{0, 1}_s$, the cotangent Dirichlet series $\psi^{- 1, 1}_s$, and the
tangent Dirichlet series $\psi^{1, - 1}_s$.

\begin{proposition}
  \label{prop:eval:D}Let $(a, b)$ be one of $(1, 0)$, $(0, 1)$, $(- 1, 1)$,
  $(1, - 1)$. Let $\rho$ be a real quadratic irrationality, and $j, s$
  nonnegative integers with $s \geq 2 j + 2$. If $j + b$ and $s$ have the
  same parity, then
  \begin{equation*}
    ( D^j \psi_s^{a, b}) ( \rho) \in \pi^s \mathbb{Q} ( \rho) .
  \end{equation*}
  If, in addition, $\rho^2 \in \mathbb{Q}$, then $( D^j \psi_s^{a, b}) (
  \rho) \in ( \pi \rho)^s \mathbb{Q}$.
\end{proposition}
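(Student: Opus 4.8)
The plan is to reduce the cases $(a,b) = (0,1)$ and $(a,b) = (1,-1)$ to the cotangent Dirichlet series $\psi^{-1,1}_s = \xi_s$, and to establish the cotangent case itself by rerunning the proof of Proposition~\ref{prop:eval:secD} with the full modular group $\Gamma_1$ in place of $\Gamma_2$. For the reductions, the elementary identities $\csc z = \cot(z/2) - \cot z$ and $\tan z = \cot z - 2\cot(2z)$ give, after dividing by $n^s$ and summing over $n \geq 1$,
\begin{equation*}
  \psi^{0,1}_s(\tau) = \psi^{-1,1}_s(\tau/2) - \psi^{-1,1}_s(\tau), \qquad
  \psi^{1,-1}_s(\tau) = \psi^{-1,1}_s(\tau) - 2\,\psi^{-1,1}_s(2\tau) .
\end{equation*}
Differentiating $j$ times and using $D^j[g(\lambda\tau)] = \lambda^j (D^j g)(\lambda\tau)$, one writes $(D^j\psi^{0,1}_s)(\rho)$ and $(D^j\psi^{1,-1}_s)(\rho)$ as $\mathbb{Q}$-linear combinations of the values $(D^j\xi_s)(\mu)$ with $\mu \in \{\rho, \rho/2, 2\rho\}$. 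Each such $\mu$ is again a real quadratic irrationality with $\mathbb{Q}(\mu) = \mathbb{Q}(\rho)$, and $\mu^2 \in \mathbb{Q}$ whenever $\rho^2 \in \mathbb{Q}$; since $b$ is odd for each of $(0,1)$, $(-1,1)$, $(1,-1)$, the parity hypothesis $j + b \equiv s \pmod{2}$ is the same in all three cases; and the differentiated series converge at algebraic irrational $\rho$ by the results of Section~\ref{sec:conv}, since $D^j$ turns them into finite combinations of series $\psi^{a,b}_{s-j}$ with $\max(a,b) \leq j+1 < s-j$. Hence it suffices to prove the proposition for $(a,b) = (-1,1)$.

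For $\xi_s$ I would repeat the proof of Proposition~\ref{prop:eval:secD} essentially verbatim. The base case $j = 0$ rests on the classical transformation recorded in Section~\ref{sec:eichler}: for odd $s$ one has $\xi_s|_{1-s}[S-1] \in \pi^s\mathbb{Q}(\tau)$ — this is Ramanujan's formula \eqref{eq:rama} — while $\xi_s|_{1-s}[T-1] = 0$ because $\cot$ is $\pi$-periodic. Since $S$ and $T$ generate $\Gamma_1$, the cocycle relation then gives $\xi_s|_{1-s}[\gamma-1] \in \pi^s\mathbb{Q}(\tau)$ for every $\gamma \in \Gamma_1$. Applying the differentiation formula \eqref{eq:ppD} inductively yields
\begin{equation*}
  (c\tau+d)^{s-2j-1}(D^j\xi_s)(\gamma\tau) - (D^j\xi_s)(\tau) = \pi^s f(\tau) + \sum_{m=0}^{j-1} f_m(\tau)(D^m\xi_s)(\tau)
\end{equation*}
with $f, f_m \in \mathbb{Q}(\tau)$, and, by Lemma~\ref{lem:fix} applied to the congruence subgroup $\Gamma_1$ itself (of level $1$), there is $\gamma \in \Gamma_1$ with $\gamma \neq \pm I$ fixing $\rho$. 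Setting $\tau = \rho$ and dividing by $(c\rho+d)^{s-2j-1} - 1$ — which is nonzero because $s \geq 2j+2$ and $\rho$ is a real quadratic irrationality — gives the analogue of \eqref{eq:DjPsiRho}, and induction on $j$ produces $(D^j\xi_s)(\rho) \in \pi^s\mathbb{Q}(\rho)$. For the sharper conclusion when $\rho = \sqrt{r}$ with $r \in \mathbb{Q}$, the same $\gamma$ fixes $-\rho$ as well, and the symmetry argument of Proposition~\ref{prop:eval:secD} applies without change, with $\cot$ being an odd function in the role played there by $\sec$ being even; it gives $(D^j\xi_s)(\rho) \in (\pi\rho)^s\mathbb{Q}$.

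I expect no substantial new obstacle, since the modular input was already supplied in Proposition~\ref{prop:eval:secD}; the cotangent case is if anything cleaner, as $\xi_s$ is an Eichler integral for the full modular group rather than for the smaller $\Gamma_2$. The points that need care are the bookkeeping in the reduction step — confirming that the auxiliary arguments $\rho/2$ and $2\rho$ retain quadratic irrationality, the field $\mathbb{Q}(\rho)$, the condition $\rho^2 \in \mathbb{Q}$, and the correct parity, and that the $j$-times differentiated series still converge at $\rho$ — and the observation that Lemma~\ref{lem:fix} must be invoked for the group $\Gamma_1$ for which $\xi_s$ has rational period functions.
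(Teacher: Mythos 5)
Your argument is correct and, for the secant and cotangent cases, coincides with the paper's: the paper likewise establishes the cotangent case by rerunning the proof of Proposition~\ref{prop:eval:secD} with the full modular group, using Ramanujan's formula \eqref{eq:rama} for the period function of $S$ and $\pi$-periodicity for $T$. Where you genuinely diverge is in the cosecant and tangent cases. The paper uses the same identities $\csc z = \cot(z/2)-\cot z$ and $\tan z = \cot z - 2\cot(2z)$, but draws from them the conclusion that $\psi^{0,1}_s$ and $\psi^{1,-1}_s$ are themselves Eichler integrals with respect to $\Gamma^0(2)$ and $\Gamma_0(2)$ respectively, with period functions again in $\pi^s\mathbb{Q}(\tau)$, and then runs the machinery of Proposition~\ref{prop:eval:secD} once more for each series, invoking Lemma~\ref{lem:fix} for those congruence subgroups. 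You instead read the identities as exact expressions $\psi^{0,1}_s(\tau) = \xi_s(\tau/2) - \xi_s(\tau)$ and $\psi^{1,-1}_s(\tau) = \xi_s(\tau) - 2\xi_s(2\tau)$ and evaluate the already-settled cotangent case at the three quadratic irrationalities $\rho/2$, $\rho$, $2\rho$. Both routes are valid; yours is slightly more economical, since it avoids verifying modularity with respect to $\Gamma^0(2)$ and $\Gamma_0(2)$ and needs Lemma~\ref{lem:fix} only for $\Gamma_1$, and your bookkeeping of the field $\mathbb{Q}(\mu)$, the condition $\mu^2\in\mathbb{Q}$, the parity (which is the same for all three odd-$b$ pairs), and convergence of the differentiated series at the scaled arguments is exactly what is required. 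The paper's route is more uniform across the four cases and is the template that the proof of Theorem~\ref{thm:eval} then reuses for higher powers, but nothing in your reduction would fail.
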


\begin{proof}
  The case $(a, b) = (1, 0)$ was proved in Proposition~\ref{prop:eval:secD}
  using the fact that, for $s$ of the required parity (namely, $s$ even),
  $\psi^{1, 0}_s$ is an Eichler integral of weight $1 - s$ with respect to the
  modular group $\Gamma_2$, whose period functions are $\pi^s$ times a
  rational function with rational coefficients.
  
  Recall that, for odd $s$, the cotangent Dirichlet series $\psi_s^{- 1, 1}$
  is an Eichler integral with respect to the full modular group. Indeed,
  Ramanujan's formula \eqref{eq:rama} shows that, for $s = 2 m - 1$,
  \begin{eqnarray*}
    \psi_s^{- 1, 1} |_{1 - s} [S - 1] & = & (- 1)^m (2 \pi)^s \sum_{n = 0}^m
    \frac{B_{2 n}}{(2 n) !}  \frac{B_{2 (m - n)}}{(2 (m - n)) !} \tau^{2 n -
    1},\\
    \psi_s^{- 1, 1} |_{1 - s} [T - 1] & = & 0.
  \end{eqnarray*}
  Consequently, the period functions are again $\pi^s$ times a rational
  function with rational coefficients. The proof of
  Proposition~\ref{prop:eval:secD} therefore applies to show the case $(a, b)
  = (- 1, 1)$ as well.
  
  Finally, note that the trigonometric relation $\csc (z) = \cot (z / 2) -
  \cot (z)$ implies that, for odd $s$, the cosecant Dirichlet series
  $\psi_s^{0, 1}$ is an Eichler integral with respect to $\Gamma^0 (2)$, the
  congruence subgroup of $\Gamma_1$ consisting of those matrices whose
  upper-right entry is even. Likewise, $\tan (z) = \cot (z) - 2 \cot (2 z)$
  shows that, for odd $s$, the tangent Dirichlet series $\psi_s^{1, - 1}$ is
  an Eichler integral with respect to $\Gamma_0 (2)$, consisting of those
  matrices in $\Gamma_1$ whose lower-left entry is even. Again, the period
  functions are of the required form to apply the proof of
  Proposition~\ref{prop:eval:secD} also in these two final cases.
\end{proof}

We are finally prepared to prove Theorem~\ref{thm:eval}, which is restated
below for the convenience of the reader.

\begin{theorem}
  Let $\rho$ be a real quadratic irrationality, and let $a, b, s$ be integers
  such that, for convergence, $s \geq \max (a, b, 1) + 1$. If $s$ and $b$
  have the same parity, then
  \begin{equation*}
    \psi^{a, b}_s (\rho) = \sum_{n = 1}^{\infty} \frac{\operatorname{trig}^{a, b}
     (\pi n \rho)}{n^s} \in \pi^s \mathbb{Q} (\rho) .
  \end{equation*}
  Moreover, if, in addition, $\rho^2 \in \mathbb{Q}$ and $a + b \geq 0$,
  then $\psi^{a, b}_s (\rho) \in (\pi \rho)^s \mathbb{Q}$.
\end{theorem}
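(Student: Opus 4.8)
The plan is to reduce the general $\psi^{a,b}_s$ to the four "building block" cases $(1,0)$, $(0,1)$, $(-1,1)$, $(1,-1)$ already handled in Proposition~\ref{prop:eval:D}, using trigonometric reduction identities to express $\operatorname{trig}^{a,b}(\pi n\tau)$ as a linear combination of derivatives (in $\tau$) of these four basic trigonometric functions evaluated at $\pi n\tau$. The key point is that differentiating $\sec(\pi n\tau)$ with respect to $\tau$ produces a factor of $\pi n$, so that $D^j$ applied termwise to $\psi_s = \sum \sec(\pi n\tau)/n^s$ lowers the power of $n$ by $j$: schematically, $D^j\psi^{1,0}_s$ is, up to $\pi^j$ and lower-order secant-power corrections, the series $\sum (\text{something involving }\sec)(\pi n\tau)/n^{s-j}$. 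Running this backwards, any fixed trigonometric monomial $\operatorname{trig}^{a,b}$ divided by $n^s$ can be written as a $\mathbb{Q}$-linear combination of $\pi^{-j}(D^j\psi^{a',b'}_{s+j})$ with $(a',b')$ among the four basic pairs and $j\geq 0$, and the parity of $j + b'$ matches that of $s+j$ exactly when $b$ and $s$ have the same parity (so the hypothesis is exactly what is needed to stay inside the range where Proposition~\ref{prop:eval:D} applies).

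First I would make the reduction to the basic cases precise. Using $\sec^2 = 1 + \tan^2$, $\csc^2 = 1 + \cot^2$, $\tan^2 = \sec^2 - 1$, etc.\ (equivalently the identity \eqref{eq:trig:red2} together with $\sec\csc = \sec\tan + \csc\cot$-type relations), one reduces any $\operatorname{trig}^{a,b}$ to a $\mathbb{Q}$-linear combination of $1$, $\sec$, $\csc$, $\sec\csc = \sec^2\cot + \ldots$— more efficiently, to the span of $\{1,\tan,\cot,\sec,\csc,\sec\csc\}$, and then further, since $\sec\csc(z) = \tfrac12(\cot(z/2)-\tan(z/2))\cdot(\ldots)$ or directly $2\csc(2z) = \sec(z)\csc(z)$, everything lands in the $\mathbb{Q}$-span (with arguments rescaled by powers of $2$ and shifted by rationals) of $1$ and the four basic functions. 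Next, I would establish the differentiation formula: since $\tfrac{d}{d\tau}\sec(\pi n\tau) = \pi n\sec(\pi n\tau)\tan(\pi n\tau)$ and $\tan^2 = \sec^2-1$, induction gives $D^j\bigl(\sec(\pi n\tau)\bigr) = (\pi n)^j\sum_{i}c_{j,i}\sec^{2i+1}(\pi n\tau)$ with $c_{j,i}\in\mathbb{Q}$; summing against $n^{-s}$ yields $D^j\psi^{1,0}_s(\tau) = \pi^j\sum_i c_{j,i}\,\psi^{2i+1,0}_{s-j}(\tau)$, and analogously for the other three. Inverting this triangular system expresses each $\psi^{a,b}_s$ (for the basic and then, via the first step, general $(a,b)$) in terms of $D^j\psi^{a',b'}_{s'}$ with basic $(a',b')$.

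Then I would assemble the argument. Given $(a,b,s)$ with $s\equiv b\pmod 2$ and $s\geq\max(a,b,1)+1$ (so the series converges, by the Convergence Theorem), the two steps above write $\psi^{a,b}_s(\rho)$ as a finite $\mathbb{Q}$-linear combination of terms $(\pi\cdot 2^{k})^{-j}(D^j\psi^{a',b'}_{s'})(2^k\rho + c)$ with $(a',b')\in\{(1,0),(0,1),(-1,1),(1,-1)\}$, $j\geq 0$, $s'\geq 2j+2$, $c\in\mathbb{Q}$, and $j+b'\equiv s'\pmod 2$; here $2^k\rho+c$ is again a real quadratic irrationality with $\mathbb{Q}(2^k\rho+c)=\mathbb{Q}(\rho)$. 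Proposition~\ref{prop:eval:D} gives each such term in $\pi^{s'}\mathbb{Q}(\rho) = \pi^s\mathbb{Q}(\rho)$ (the powers of $\pi$ bookkeep correctly because $s' = s - j$ plus the $\pi^j$ pulled out by differentiation cancel to $\pi^s$). For the refined statement, when $\rho^2\in\mathbb{Q}$ one needs $a+b\geq 0$: the reductions then only introduce basic functions with nonnegative $(a'+b')$, equivalently only $D^j$ of $\sec$, $\csc$, $\sec\csc$—one must check that no $\cot$ or $\tan$ with a net negative exponent is needed—so Proposition~\ref{prop:eval:D} applies in its $(\pi\rho)^s\mathbb{Q}$ form, and one must also verify the argument-shift $\tau\mapsto 2^k\tau + c$ respects the "$\rho^2\in\mathbb{Q}$" reduction (it does not in general, so the shifts must be arranged to be dilations only, i.e.\ $c=0$, which the identities $\csc(2z)=\tfrac12\sec(z)\csc(z)$ etc.\ allow when $a+b\geq 0$).

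\emph{The main obstacle} I expect is the bookkeeping in the last paragraph: verifying that under the hypothesis $a+b\geq 0$ the reduction to basic functions can be carried out using only dilations $\tau\mapsto 2^k\tau$ (no rational translations) and only the basic pairs $(1,0)$, $(0,1)$, $(1,-1)$ or $(-1,1)$ in a way that keeps $a'+b'\geq 0$, so that the $(\pi\rho)^s\mathbb{Q}$ conclusion survives; the example $\psi^{-2,1}_3(\sqrt2)\notin(\pi\sqrt2)^3\mathbb{Q}$ shows this is delicate and the hypothesis $a+b\geq 0$ is doing real work. By contrast, the weaker conclusion $\psi^{a,b}_s(\rho)\in\pi^s\mathbb{Q}(\rho)$ is comparatively routine once the reduction and differentiation formulas are in place, since translations by rationals are harmless there.
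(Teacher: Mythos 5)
Your overall strategy coincides with the paper's: reduce $\operatorname{trig}^{a,b}$ by the identity \eqref{eq:trig:red2} to a few basic families, realize higher powers as derivatives of the four basic Dirichlet series (the paper's explicit form of your ``triangular system'' is $\sec^{a+2}=\frac{1}{a(a+1)}(D^2+a^2)\sec^a$, giving $\psi^{a,0}_s$ as a combination of $\pi^{-2j}D^{2j}\psi^{1,0}_{s+2j}$), and invoke Proposition~\ref{prop:eval:D}. There is, however, one genuine gap. Your reduction has no mechanism for the pure cosine--sine terms that it unavoidably generates. When $a$ and $b$ have mixed signs with $a+b<0$ (and a fortiori when $a\leq 0$ and $b\leq 0$, a case your proposal never addresses at all), the recursion produces monomials $\operatorname{trig}^{a',b'}$ with $a'\leq 0$ and $b'\leq 0$: for instance $\psi^{1,-3}_s=\psi^{1,-1}_s-\psi^{-1,-1}_s$, i.e.\ $\sec\sin^3=\tan-\sin\cos$. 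Such terms are not $\mathbb{Q}$-linear combinations of derivatives of $\sec$, $\csc$, $\tan$, $\cot$, and Proposition~\ref{prop:eval:D} says nothing about them; the paper evaluates them separately via the Fourier--Bernoulli formulas \eqref{eq:ds:cos}--\eqref{eq:ds:sin} (Lemma~\ref{lem:eval:cs}), an ingredient entirely absent from your argument. Relatedly, your intermediate claim that every $\operatorname{trig}^{a,b}$ lies in the $\mathbb{Q}$-span of $\{1,\tan,\cot,\sec,\csc,\sec\csc\}$ with rescaled and shifted arguments is false as a statement about functions: rescaling and translating do not change pole orders, so $\sec^3$ cannot lie in that span. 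Only the derivative mechanism of your second paragraph repairs this, and it must be stated as the reduction from the start.

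The second issue you raise yourself -- whether the refined conclusion $\psi^{a,b}_s(\rho)\in(\pi\rho)^s\mathbb{Q}$ survives the argument rescalings and rational translations introduced by identities such as $\sec(z)\csc(z)=2\csc(2z)$ -- is a self-inflicted obstacle and should simply be eliminated. The $(a,b)=(1,1)$ instance of \eqref{eq:trig:red2} gives $\sec\csc=\tan+\cot$ directly (note that your identity $\sec\csc=\sec\tan+\csc\cot$ is incorrect), so the entire reduction can be carried out, as in the paper, without ever changing the argument $\pi n\tau$. Once the argument is held fixed, your reading of the hypothesis $a+b\geq 0$ is exactly right: it is preserved by every application of \eqref{eq:trig:red2}, it excludes all terminal cases with $a'+b'<0$ (only $(0,0)$ and the four basic pairs and their positive-power families survive), and Proposition~\ref{prop:eval:D} then yields the $(\pi\rho)^s\mathbb{Q}$ conclusion with no further bookkeeping. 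Also note that your formula $D^j\sec(\pi n\tau)=(\pi n)^j\sum_i c_{j,i}\sec^{2i+1}(\pi n\tau)$ holds only for even $j$; odd derivatives carry a factor of $\tan$, which is in fact needed for the families $(a,-1)$ with $a$ even.
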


\begin{proof}
  The case $a \leq 0$ and $b \leq 0$ follows as a special case of
  Lemma~\ref{lem:eval:cs}.
  
  Next, consider the case $a > 0$ and $b > 0$. By applying
  \eqref{eq:trig:red2}, that is $\psi^{a, b}_s = \psi_s^{a - 2, b} +
  \psi_s^{a, b - 2}$, recursively, we find that the general case follows if we
  can evaluate the cases $(a, b)$ with $a \in \{- 1, 0\}$ and $b > 0$ as well
  as the cases $(a, b)$ with $b \in \{- 1, 0\}$ and $a > 0$.
  
  In the remaining cases, we have either $a > 0$ and $b \leq 0$, or $a
  \leq 0$ and $b > 0$. If $a < - 1$, then we apply $\psi_s^{a, b} =
  \psi_s^{a + 2, b} - \psi_s^{a + 2, b - 2}$, which follows from
  \eqref{eq:trig:red2}, while, if $b < - 1$, then we similarly apply
  $\psi_s^{a, b} = \psi_s^{a, b + 2} - \psi_s^{a - 2, b + 2}$. Proceeding
  recursively, the general case reduces to the cases $(a, b)$ with $a \in \{-
  1, 0\}$ and $b \geq 0$ as well as the cases $(a, b)$ with $b \in \{- 1,
  0\}$ and $a \geq 0$. Note that, if the condition $a + b \geq 0$
  holds initially, then it holds for all the recursively generated cases.
  
  In summary, it remains to show the claim in the cases where $(a, b)$ takes
  one of the four forms $(a, 0)$, $(a, - 1)$, $(0, b)$, or $(- 1, b)$ with $a,
  b > 0$ (in each of these cases, we obviously have $a + b \geq 0$). In
  the remainder, we will show how to prove the case $(a, 0)$. The other cases
  follow similarly.
  
  Denote $D = \mathd / \mathd \tau$. Note that $D \sec^a (\tau) = a \tan
  (\tau) \sec^a (\tau)$. Differentiating once more, we find
  \begin{equation*}
    \sec^{a + 2} (\tau) = \frac{1}{a^{} (a + 1)} (D^2 + a^2) \sec^a (\tau) .
  \end{equation*}
  This shows that, for odd $a$,
  \begin{equation*}
    \sec^a (\tau) = \frac{1}{(a - 1) !} (D^2 + (a - 2)^2) (D^2 + (a - 4)^2)
     \cdots (D^2 + 1^2) \sec (\tau),
  \end{equation*}
  which implies that $\psi^{a, 0}_s$ is a linear combination of derivatives
  $D^j \psi_s^{1, 0}$ of the secant Dirichlet series; more precisely, we find
  that
  \begin{equation*}
    \psi^{a, 0}_s (\tau) = \sum_{j = 0}^{(a - 1) / 2} \frac{b_j}{\pi^{2 j}}
     (D^{2 j} \psi^{1, 0}_{s + 2 j}) (\tau)
  \end{equation*}
  for some rational numbers $b_j$. By the assumption $s \geq a + 1$,
  Proposition~\ref{prop:eval:secD} applies and the claimed evaluation of
  $\psi^{a, 0}_s (\rho)$ follows when $a > 0$ is odd. For even $a$, we
  similarly have
  \begin{equation*}
    \sec^a (\tau) = \frac{1}{(a - 1) !} (D^2 + (a - 2)^2) (D^2 + (a - 4)^2)
     \cdots (D^2 + 2^2) D \tan (\tau),
  \end{equation*}
  demonstrating that $\psi^{a, 0}_s$ now is a linear combination of
  derivatives of the tangent Dirichlet series. Our claim therefore follows
  analogously from Proposition~\ref{prop:eval:D}.
\end{proof}

\section{Conclusion}

We have shown that all Dirichlet series $\sum_{n = 1}^{\infty} f (\pi n \tau)
/ n^s$ of the appropriate parity, with $f (\tau)$ an arbitrary product of the
elementary trigonometric functions, evaluate as a (simple) algebraic multiple
of $\pi^s$ if $\tau$ is a real quadratic irrationality. Can this, in
interesting cases, be extended to series such as
\begin{equation*}
  \sum_{n = 1}^{\infty} \frac{\cot (\pi n \tau_1) \cdots \cot (\pi n
   \tau_r)}{n^s},
\end{equation*}
where $\tau_1, \ldots, \tau_r$ are quadratic (or algebraic) irrationalities?
Some examples are given in \cite[Example 6.5]{kmt-barnes}, where it is shown
that, for instance,
\begin{equation*}
  \sum_{n = 1}^{\infty} (- 1)^{n + 1} \frac{\csc (\pi n \zeta_5) \csc (\pi n
   \zeta_5^2) \cdots \csc (\pi n \zeta_5^4)}{n^6} = \frac{\pi^6}{935, 550},
\end{equation*}
with $\zeta_5 = e^{2 \pi i / 5}$.

Our method for evaluating trigonometric Dirichlet series proceeds in a
recursive way. In certain special cases, such as \cite[Theorem
5.2]{berndt-dedekindsum76}, \cite[Corollary 6.4]{kmt-barnes} or
\cite[Proposition 1]{lrr-secantzeta}, the evaluations can be made entirely
explicit. It is natural to wonder how much more explicit the evaluations given
in this paper can be made in the general case.

\begin{acknowledgements}
I thank Florian Luca for sharing his insight into
his proof \cite[Theorem 1]{lrr-secantzeta} of the convergence of the secant
Dirichlet series, and I am grateful to Bruce Berndt for helpful comments on an
early version of this paper.
\end{acknowledgements}


\begin{thebibliography}{{Len}02}

\bibitem[Ber76]{berndt-dedekindsum76}
B.~C. Berndt.
\newblock Dedekind sums and a paper of {G. H. Hardy}.
\newblock {\em J. London Math. Soc.}, 13(2):129--137, May 1976.

\bibitem[Ber77]{berndt-modular77}
B.~C. Berndt.
\newblock Modular transformations and generalizations of several formulae of
  {R}amanujan.
\newblock {\em Rocky Mountain J. Math.}, 7(1):147--190, March 1977.

\bibitem[Ber78]{berndt-series78}
B.~C. Berndt.
\newblock Analytic {E}isenstein series, theta-functions, and series relations
  in the spirit of {R}amanujan.
\newblock {\em Journal f{\"u}r die reine und angewandte Mathematik},
  303/304:332--365, 1978.

\bibitem[Ber89]{berndtII}
B.~C. Berndt.
\newblock {\em Ramanujan's {N}otebooks, {P}art {II}}.
\newblock Springer-Verlag, New York, 1989.

\bibitem[Ber98]{berndtV}
B.~C. Berndt.
\newblock {\em Ramanujan's {N}otebooks, {P}art {V}}.
\newblock Springer-Verlag, New York, 1998.

\bibitem[BS14]{bs-secantsums}
B.~C. Berndt and A.~Straub.
\newblock On a secant {D}irichlet series and {E}ichler integrals of
  {E}isenstein series.
\newblock {\em Preprint}, 2014.
\newblock Available at: \url{http://arxiv.org/abs/1406.2273}.

\bibitem[CG14]{cg-secant}
P.~Charollois and M.~Greenberg.
\newblock Rationality of secant zeta values.
\newblock {\em Ann. Sci. Math. Quebec}, 2014.
\newblock To appear.

\bibitem[GMR11]{gun-murty-rath2011}
S.~Gun, M.~R. Murty, and P.~Rath.
\newblock Transcendental values of certain {E}ichler integrals.
\newblock {\em Bull. Lond. Math. Soc.}, 43(5):939--952, May 2011.

\bibitem[KMT13]{kmt-barnes}
Y.~Komori, K.~Matsumoto, and H.~Tsumura.
\newblock Barnes multiple zeta-functions, {R}amanujan's formula, and relevant
  series involving hyperbolic functions.
\newblock {\em J. Ramanujan Math. Soc.}, 28(1):49--69, 2013.

\bibitem[Lag92]{lagrange-pell}
J.~L. Lagrange.
\newblock Solution d'un probl\`eme d'arithm\'etique.
\newblock In J.-A. Serret, editor, {\em Oeuvres de Lagrange}, volume~1, pages
  671--731. Gauthier-Villars, Paris, 1867-1892.

\bibitem[{Len}02]{lenstra-pell}
H.~W. {Lenstra Jr.}
\newblock Solving the {P}ell equation.
\newblock {\em Notices Amer. Math. Soc.}, 49(2):182--192, 2002.

\bibitem[Lew81]{lewin2}
L.~Lewin.
\newblock {\em Polylogarithms and associated functions}.
\newblock North Holland, New York, 1981.

\bibitem[LRR14]{lrr-secantzeta}
M.~N. Lal{\'i}n, F.~Rodrigue, and M.~D. Rogers.
\newblock Secant zeta functions.
\newblock {\em Journal of Mathematical Analysis and Applications},
  409(1):197--204, 2014.

\bibitem[PP13]{popa-period}
V.~Pa\c{s}ol and A.~A. Popa.
\newblock Modular forms and period polynomials.
\newblock {\em Proc. Lond. Math. Soc.}, 107(4):713--743, 2013.

\bibitem[Riv12]{rivoal-conv12}
T.~Rivoal.
\newblock On the convergence of {D}iophantine {D}irichlet series.
\newblock {\em Proc. Edinb. Math. Soc.}, 55(02):513--541, February 2012.

\bibitem[Wor81]{worley81}
R.~T. Worley.
\newblock Estimating {$\alpha -p/q$}.
\newblock {\em J. Austral. Math. Soc. Ser. A}, 31(2):202--206, 1981.

\bibitem[Zag08]{zagier-intro123}
D.~Zagier.
\newblock Elliptic modular forms and their applications.
\newblock In {\em The 1-2-3 of Modular Forms}. Springer-Verlag, Berlin, 2008.

\end{thebibliography}

\end{document}